\newtheorem{theorem}{Theorem}[section]
\newtheorem{corollary}[theorem]{Corollary}
\newtheorem{lemma}[theorem]{Lemma}
\newtheorem{proposition}[theorem]{Proposition}
\newtheorem{example}[theorem]{Example}
\theoremstyle{definition}
\newtheorem{definition}[theorem]{Definition}
\newtheorem{remark}[theorem]{\textbf{Remark}}
\numberwithin{equation}{section}
\newcommand{\E}{{\mathbb E}}
\renewcommand{\P}{{\mathsf P}}
\newcommand{\R}{{\mathbb R}}
\newcommand{\N}{{\mathbb N}}
\newcommand{\Acal}{{\mathcal A}}
\newcommand{\Ccal}{{\mathcal C}}
\newcommand{\Ecal}{{\mathcal E}}
\newcommand{\Fcal}{{\mathcal F}}
\newcommand{\Lcal}{{\mathcal L}}
\newcommand{\Mcal}{{\mathcal M}}
\newcommand{\Pcal}{{\mathcal P}}
\newcommand{\Qcal}{{\mathcal Q}}
\DeclareMathOperator{\conv}{co}
\DeclareMathOperator{\cconv}{\overline{co}*}
\DeclareMathOperator{\cconvtv}{\overline{co}}
\DeclareMathOperator{\dist}{d}
\DeclareMathOperator{\sol}{solid}
\newcommand{\TV}{\textnormal{TV}}
\newcommand{\Peff}{\mathcal{P}_\textnormal{eff}}
\newcommand{\ba}{\textnormal{\textsf{ba}}}
\begin{document}

\title{A complete characterization of testable hypotheses}
\author{
Martin Larsson\footnote{Department of Mathematical Sciences, Carnegie Mellon University, \texttt{larsson@cmu.edu}}   \and
Aaditya Ramdas\footnote{Departments of Statistics and Machine Learning, Carnegie Mellon University \texttt{aramdas@cmu.edu}}
\and
Johannes Ruf\footnote{Department of Mathematics, London School of Economics, \texttt{j.ruf@lse.ac.uk}}\\[2ex]
}
%\date{}
\maketitle

\begin{abstract}
We revisit a fundamental question in hypothesis testing: given two sets of probability measures $\mathcal{P}$ and $\mathcal{Q}$, when does a nontrivial (i.e.\ strictly unbiased) test for $\mathcal{P}$ against $\mathcal{Q}$ exist? Le~Cam showed that, when $\mathcal{P}$ and $\mathcal{Q}$ have a common dominating measure, a test that has power exceeding its level by more than $\varepsilon$ exists if and only if the convex hulls of $\mathcal{P}$ and $\mathcal{Q}$ are separated in total-variation distance by more than $\varepsilon$. The requirement of a dominating measure is frequently violated in nonparametric statistics.
In a passing remark, Le~Cam described an approach to address more general scenarios, but he stopped short of stating a formal theorem.
This work completes Le~Cam's program, by presenting a matching necessary and sufficient condition for testability: for the aforementioned theorem to hold without assumptions, one must take the closures of the convex hulls of $\mathcal{P}$ and $\mathcal{Q}$ in the space of bounded finitely additive measures. 
We provide simple elucidating examples, and elaborate on various subtle measure-theoretic and topological points regarding compactness and achievability. 
\end{abstract}

\section{Introduction} \label{sec:intro}

Let $\Mcal_1$ denote the set of all (countably additive) probability measures  on a given measurable space $(\Omega,\Fcal)$. The null and alternative hypotheses are $\Pcal,\Qcal \subset \Mcal_1$, always assumed nonempty. A \emph{test} $\phi$ is a $[0,1]$-valued measurable function on this space, and its worst-case level (or type-I error) and power are given by $\sup_{\mu \in \Pcal} \E_\mu[\phi]$ and $\inf_{\nu \in \Qcal} \E_\nu[\phi]$. Let $\Phi$ denote the set of all tests. One calls a test $\phi$  \emph{strictly unbiased} if $$\sup_{\mu \in \Pcal} \E_\mu[\phi] < \inf_{\nu \in \Qcal} \E_\nu[\phi],$$
meaning that its worst-case power is larger than its worst-case level. Since this is a rather reasonable minimal requirement for a test, we use the shorter term ``nontrivial'' to mean strictly unbiased. This paper answers a simple question: when does a nontrivial test exist? 

The risk of a test $\phi$ is defined as the sum of its worst-case type-I and type-II errors,
\[
 R(\phi) = R(\phi,\Pcal,\Qcal) = \sup_{\mu \in \Pcal} \E_\mu[\phi] + \sup_{\nu \in \Qcal} \E_\nu[1-\phi].
\]
The risk is unaffected if $\Pcal$ and $\Qcal$ are replaced by their convex hulls $\conv(\Pcal)$ and $\conv(\Qcal)$, which consist of all finite convex combinations of elements of $\Pcal$ and $\Qcal$. That is,
 \[
 R(\phi,\Pcal,\Qcal) = R(\phi,\conv(\Pcal),\conv(\Qcal)).
 \]
The minimax risk is defined by
\[
R(\Pcal,\Qcal) = \inf_{\phi \in \Phi} R(\phi, \Pcal, \Qcal).
\]
Clearly a test $\phi$ is nontrivial if and only if $R(\phi) < 1$, and such a test exists if and only if $R(\Pcal,\Qcal) < 1$.
If $R(\Pcal,\Qcal) = R(\phi^*)$ for some $\phi^* \in \Phi$, we call $\phi^*$ a minimax optimal test. 
Because the risk $R(\phi)$ is a supremum of affine functions, it is convex. Thus solving for the minimax risk is a convex minimization problem. However, $\Phi$ is not in general compact in any topology for which the risk is continuous (or lower semicontinuous), and so a minimax optimal test may not always exist.
It is well-known that a minimax optimal test \emph{does} exist under certain conditions; this is discussed further below.

The minimax risk is closely related to the total-variation (TV) distance between $\conv(\Pcal)$ and $\conv(\Qcal)$.
Indeed, for any test $\phi$ and arbitrary 
probability measures $\mu,\nu$, it  holds that
\begin{equation*}
\E_\mu[\phi] + \E_\nu[1-\phi] \geq 1-\dist_\TV(\mu,\nu), 
\end{equation*}
where $\dist_\TV$ denotes the TV distance; see~\eqref{eq:TV-dist-formula-proof-section} for details.
As a result we have
\[
R(\phi) \geq 1-\dist_\TV(\mu,\nu)
\]
for any $\phi \in \Phi$, $\mu \in \conv(\Pcal)$, and $\nu \in \conv(\Qcal)$. Minimizing over $\phi$ and maximizing over $\mu$ and $\nu$ then yields the ``weak duality''
\begin{equation} \label{eqn:weak-duality-intro}
R(\Pcal, \Qcal) \geq 1-\dist_\TV(\conv(\Pcal), \conv(\Qcal)),
\end{equation}
where the TV distance between sets $S,T$ of (finitely or countably additive) probability measures is given by $\dist_\TV(S,T) = \inf_{\mu \in S,\, \nu \in T} d_\TV(\mu,\nu)$.
To foreshadow later developments, we note that the definition of TV distance carries over without change to (sets of) \emph{finitely additive} probability measures.

Because the distance between two sets is always the same as the distance between their closures, we could replace $\conv(\Pcal)$ and $\conv(\Qcal)$ by their TV closures $\cconvtv(\Pcal)$ and $\cconvtv(\Qcal)$ in \eqref{eqn:weak-duality-intro}.
Crediting Lucien Le~Cam, \cite{kraft1955some} presented the following fundamental result, which is a sufficient condition for equality to hold in \eqref{eqn:weak-duality-intro}.

\begin{theorem} \label{T:classical}
If $\Pcal$ and $\Qcal$ have a common dominating measure $\gamma$ (meaning that $\mu \ll \gamma$ and $\nu \ll \gamma$ for all $\mu\in\Pcal$, $\nu\in\Qcal$, {\color{black}and always understood to be $\sigma$-finite}), then for any $\varepsilon \geq 0$,
{\color{black}
\begin{equation}\label{eq:kraft-lecam}
\begin{aligned}
\exists \text{ test } \phi \in \Phi \colon \inf_{\nu \in \Qcal} \E_\nu&[\phi] > \sup_{\mu \in \Pcal} \E_\mu[\phi]  + \varepsilon \quad \\
&\iff \quad \dist_\TV(\cconvtv(\Pcal), \cconvtv(\Qcal)) > \varepsilon.
\end{aligned}
\end{equation}
}
In fact,  we have
\begin{equation}\label{eq:minimax-risk-classical}
R(\Pcal,\Qcal) = 1- \dist_\TV(\cconvtv(\Pcal), \cconvtv(\Qcal))  
\end{equation}
where the minimax risk is achieved by some $\phi^* \in \Phi$, but the infimum in $d_\TV$ is in general not achieved by any $\mu \in \cconvtv(\Pcal),\nu \in \cconvtv(\Qcal)$.
\end{theorem}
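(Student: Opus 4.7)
The plan is to prove~\eqref{eq:minimax-risk-classical} via Sion's minimax theorem, exploiting weak* compactness of the test space in $L^\infty(\gamma)$; the weak-duality bound~\eqref{eqn:weak-duality-intro} already gives the ``$\geq$'' direction, so what really needs work is the matching upper bound on $R(\Pcal,\Qcal)$. Under the dominating-measure assumption, every $\mu\in\conv(\Pcal)$ and $\nu\in\conv(\Qcal)$ admits a density in $L^1(\gamma)$, and, up to $\gamma$-a.s.\ equality, tests coincide with the order interval $\{\phi\in L^\infty(\gamma):0\le\phi\le 1\}$, which is convex and weak* compact by Banach--Alaoglu (identifying $L^\infty(\gamma)$ with $L^1(\gamma)^*$).

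First I would rewrite the risk as a joint supremum,
\[
R(\phi)=\sup_{(\mu,\nu)\in\conv(\Pcal)\times\conv(\Qcal)} F(\phi;\mu,\nu),\qquad F(\phi;\mu,\nu):=\E_\mu[\phi]+\E_\nu[1-\phi],
\]
so that $R(\Pcal,\Qcal)=\inf_{\phi\in\Phi}\sup_{(\mu,\nu)} F(\phi;\mu,\nu)$. Next I would apply Sion's theorem to swap the $\inf$ and the $\sup$: for each fixed $(\mu,\nu)$ the map $\phi\mapsto F(\phi;\mu,\nu)$ is weak* continuous and linear on $\Phi$ (since $d\mu/d\gamma,d\nu/d\gamma\in L^1(\gamma)$), and for each fixed $\phi$ the map $(\mu,\nu)\mapsto F(\phi;\mu,\nu)$ is affine and continuous in the TV topology on the convex set $\conv(\Pcal)\times\conv(\Qcal)$. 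This yields
\[
R(\Pcal,\Qcal)=\sup_{(\mu,\nu)\in\conv(\Pcal)\times\conv(\Qcal)}\inf_{\phi\in\Phi} F(\phi;\mu,\nu).
\]
The inner infimum is the elementary Neyman--Pearson calculation: it is attained by the likelihood-ratio test $\phi^*_{\mu,\nu}=\mathbf{1}\{d\nu/d\gamma>d\mu/d\gamma\}$ and equals $1-\dist_\TV(\mu,\nu)$, i.e.\ the equality case of~\eqref{eq:weak-duality}, which crucially uses the dominating measure. Taking the sup over $(\mu,\nu)$ then gives~\eqref{eq:minimax-risk-classical}, since TV distance between two sets agrees with TV distance between their TV closures; and the equivalence in~\eqref{eq:kraft-lecam} follows because a test with $\varepsilon$-gap between worst-case power and level exists iff $R(\Pcal,\Qcal)<1-\varepsilon$.

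For existence of a minimax optimal $\phi^*$, I would observe that $\phi\mapsto R(\phi)$ is a supremum of weak* continuous linear functionals and hence weak* lower semicontinuous, so it attains its minimum on the weak* compact set $\Phi$. The final negative clause --- that the TV infimum between $\cconvtv(\Pcal)$ and $\cconvtv(\Qcal)$ is in general not attained --- needs only a separate concrete example, which is thematically central to the paper since it is precisely this failure that will force the passage to finitely additive closures in the main results. The main technical subtlety, I expect, lies in verifying the hypotheses of Sion's theorem on the $(\mu,\nu)$ side, where the constraint set is convex but generally non-compact, so one must fix a topology (TV, or equivalently the norm topology on densities in $L^1(\gamma)$) making $F(\phi;\cdot,\cdot)$ continuous and affine; this is routine but worth doing carefully.
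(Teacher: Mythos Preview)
Your approach is correct and is essentially the classical direct route due to Le~Cam, but it differs meaningfully from how the paper proves Theorem~\ref{T:classical}. The paper does \emph{not} prove this result directly; instead it first establishes the general Theorem~\ref{T_kraft} via Fan's minimax theorem with compactness taken on the \emph{measure} side (the weak-$*$ compact set $\cconv(\Pcal)\times\cconv(\Qcal)\subset\ba\times\ba$), and then deduces Theorem~\ref{T:classical} as a corollary via Proposition~\ref{prop:ref-meas-tv-dist}, which uses a Hahn--Banach separation argument in $L^1(\gamma)$ to identify $\dist_\TV(\cconvtv(\Pcal),\cconvtv(\Qcal))$ with $\dist_\TV(\cconv(\Pcal),\cconv(\Qcal))$. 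Your argument instead puts compactness on the \emph{test} side (the weak-$*$ compact order interval in $L^\infty(\gamma)$) and applies the minimax theorem directly, bypassing both $\ba$ and Hahn--Banach. This is more economical for the dominated case in isolation; the paper's detour is deliberate, since its aim is to exhibit the classical result as a specialization of the general $\ba$-based theorem. The existence argument for a minimax optimal $\phi^*$ is, however, identical in both: lower semicontinuity of $R(\phi)$ in $\sigma(L^\infty(\gamma),L^1(\gamma))$ together with Banach--Alaoglu.

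One small simplification: you flag the verification of Sion's hypotheses on the $(\mu,\nu)$ side as the main subtlety, but in fact no topology is needed there at all. Fan's version of the minimax theorem (stated in Section~\ref{sec:ba-and-proofs}) requires only concavity on the non-compact side, and $F(\phi;\cdot,\cdot)$ is affine. So the TV-continuity check, while correct, is superfluous.
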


In words, if $\Pcal$ and $\Qcal$ have a common dominating measure, then a nontrivial test exists if and only if the convex hulls of $\Pcal$ and $\Qcal$ are separated in TV distance. Actually, \cite{kraft1955some} presented only~\eqref{eq:kraft-lecam} and proved it using the Hahn--Banach separating hyperplane theorem. A statement like \eqref{eq:minimax-risk-classical} appears on page~476 of~\cite{le2012asymptotic} and was proved succinctly using a minimax theorem, but the left-hand side there is subtly (yet critically) different; we return to Le Cam's claim in Theorem~\ref{th:gen-test} and  again in Appendix~\ref{sec:gen-tests}. Also using Hahn--Banach, Proposition~\ref{prop:ref-meas-tv-dist} shows that  Theorem~\ref{T:classical} is a corollary of our own more general Theorem~\ref{T_kraft}, itself proved using a minimax theorem.

\begin{example}
\label{ex:johannes-beats-gemini}
The infimum TV distance is not achieved in general in Theorem~\ref{T:classical}. To see this, let $\Qcal = \{\delta_0\}$ and 
$$\Pcal = \left\{\left(\frac12 - \frac1n\right)\delta_0 + \left(\frac12 + \frac1n\right)\delta_n \colon n \in \N,\ n \ge 3\right\}.$$ 
Then $\dist_\TV(\conv(\Pcal), \delta_0)=1/2$, but this is not achieved by any $\mu \in \cconvtv(\Pcal)$ because there is no such $\mu$ with mass exactly 1/2 at 0. Said differently, the TV projection onto $\cconvtv(\Pcal)$ may not exist. (The TV projection always exists onto $\cconv(\Pcal)$, an object introduced later.)
\end{example}

The assumption of a common dominating measure made in the above theorem appears innocuous but is violated in many standard nonparametric testing problems. For example, here are commonly encountered classes which do not have a dominating measure: 
\begin{enumerate}
    \item distributions on a bounded support (like $[0,1]$) with a specified mean (like 0.5);
    \item all symmetric distributions around the origin;
    \item all (unbounded) distributions with a specified mean and bound on the variance;
    \item  all $\sigma$-sub-Gaussian distributions for a given $\sigma > 0$;
    \item the TV, Wasserstein, or Kolmogorov ball around a given distribution.
\end{enumerate}

In all these cases, Theorem~\ref{T:classical} is silent. The importance of studying such settings has not escaped statisticians. For example, ~\cite{Huber:Strassen:1973} remark at the end of their Section 6 that ``...the more interesting sets $\Pcal,\Qcal$ are not dominated...''.
Here is a very simple example from~\cite{van2002statistical} where the assumption of Theorem~\ref{T:classical} fails and its conclusion also fails. 

\begin{example}\label{eg:van-de-waart}
Let $\Pcal =\{\delta_x: x \in [0,1]\}$, where $\delta_x$ is the Dirac delta mass at $x$, and let $\Qcal=\{\mathrm{Uni}[0,1]\}$, where $\mathrm{Uni}[0,1]$ denotes the uniform distribution on $[0,1]$. The TV distance between $\mathrm{Uni}[0,1]$ and $\cconvtv(\Pcal)$ equals 1, the largest possible value. 
However, as should be intuitively clear, and also follows as a consequence of our main theorem below, no nontrivial test exists for this problem and the minimax risk is $R(\Pcal, \Qcal) = 1$. (See also Proposition~\ref{P:2}.) {\color{black}Note that above, a test $\phi$ maps a \emph{single observation} to $[0,1]$.} Letting $\Pcal=\{\delta_x \times \delta_x: x\in[0,1]\}$ and $\Qcal = \{\mathrm{Uni}[0,1] \times \mathrm{Uni}[0,1]\}$, one can design a perfect test that has zero type-I error and power one (the test rejects if the two observations are different). 
While our setup allows the underlying $\Omega$ and distributions to be arbitrary, and covers the case of $\Pcal$ and $\Qcal$ consisting of product distributions as a special case, the purpose of the example is to show that Theorem~\ref{T:classical} can fail in nondominated settings.  
\end{example}

After seeing the above example, it may be tempting to conjecture that when there is no reference measure, the closure in TV may need to be replaced by the \emph{standard weak closure} (i.e.\ closure in the standard topology of weak convergence of probability measures). Indeed in the above example, the weak closure of $\conv(\Pcal)$ contains $\mathrm{Uni}[0,1]$. However, the following simple counterexample  shows that using the weak closure of the convex hull is also in general the wrong answer.

\begin{example} \label{ex:intro-2}
Take $\Omega = [0,1]$, $\Pcal=\{\delta_x \colon x\in[0,0.5)\}$, and $\Qcal=\{\delta_y \colon y\in(0.5,1]\}$. Then a perfect test exists: $\bm1_{\{X>0.5\}}$ has level 0 and power 1. However, the standard weak closures of both $\Pcal$ and $\Qcal$ include $\delta_{0.5}$. In short, perfect tests exist even though the weak closures of the null and alternative hypotheses intersect, so~\eqref{eq:kraft-lecam} could not have used the weak closures of $\conv(\Pcal)$ and $\conv(\Qcal)$, respectively. 
Once more, our theorem will lead to the right conclusion.
\end{example}

In the first example above, the TV closure was not large enough. In the second example, the standard weak closure was too large. 
Our main result employs a type of closure that lies in between (in strength) the standard weak and strong closures: 
we take the closure in the \emph{weak-$*$ topology on $\ba$}, the space of bounded finitely additive measures.
{\color{black}In this topology, convergence of $\mu_\alpha$ to $\mu$ is equivalent to convergence of $\E_{\mu_\alpha}[f]$ to $\E_\mu[f]$ for all bounded measurable functions $f$.}
The formal definitions are reviewed further in Section~\ref{sec:ba-and-proofs}.

Let us give some intuition for why the weak-$*$ topology arises naturally. 
Suppose a test $\phi$ has risk $R(\phi) = r$, so that $\E_\mu[\phi] + \E_\nu[1-\phi] \le r$ for all $\mu \in \Pcal$ and $\nu \in \Qcal$, and $r$ is the smallest number with this property. 
This inequality remains true for all $\mu, \nu$ in the convex hulls $\conv(\Pcal), \conv(\Qcal)$, and then for all $\mu, \nu$ in the \emph{closures} of the convex hulls---assuming the closures are taken in a topology for which the expectation $\E_\mu[\phi]$ is continuous in $\mu$. 
The significance of the weak-$*$ topology on $\ba$ is that it is the \emph{weakest} topology with this property for any test $\phi$. Thus the weak-$*$ convex closures $\cconv(\Pcal), \cconv(\Qcal)$ are the \emph{largest} extensions of $\Pcal, \Qcal$ that preserve the risk of any test $\phi$. It is the distance between these closures that matters for the minimal achievable risk.
While this motivates the use of the weak-$*$ topology, it does not explain why the space $\ba$ is needed. This is indeed a more subtle matter, which we address further in Section~\ref{sec:ba-and-proofs}.

We now state our main result giving a quantitative characterization of testable hypotheses.
(We recall later in \eqref{eq:TV-dist-formula-proof-section} how the TV distance is defined in $\ba$.)

\begin{theorem} \label{T_kraft}
For any $\Pcal,\Qcal\subset \Mcal_1$ and $\varepsilon \ge 0$, we have the following equivalence:
{\color{black}
\begin{align*}
\exists \text{ test } \phi : \inf_{\nu \in \Qcal} \E_\nu&[\phi] >  \sup_{\mu \in \Pcal} \E_\mu[\phi] + \varepsilon \\
&\iff 
\dist_\TV(\cconv(\Pcal), \cconv(\Qcal)) > \varepsilon.
\end{align*}
}
In fact, we have
\begin{equation}\label{eq:minimax-risk-ba}
  R(\Pcal,\Qcal) = 1- \dist_\TV(\cconv(\Pcal), \cconv(\Qcal)),
\end{equation}
where 
the infimum in the $d_\TV$ is achieved by some $\mu \in \cconv(\Pcal), \nu \in \cconv(\Qcal)$.
\end{theorem}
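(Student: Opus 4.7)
The plan is to combine the weak duality inequality~\eqref{eqn:weak-duality-intro} already established in the introduction with a minimax argument on the space $\ba$ of bounded finitely additive measures, which yields the risk identity~\eqref{eq:minimax-risk-ba}; the $\varepsilon$-equivalence is then an immediate corollary. First, since $\mu \mapsto \E_\mu[\phi]$ is linear and, by the very definition of the weak-$*$ topology on $\ba$, weak-$*$ continuous for each test $\phi \in \Phi$, one has $\sup_{\mu \in \Pcal} \E_\mu[\phi] = \sup_{\mu \in \cconv(\Pcal)} \E_\mu[\phi]$ and likewise on the alternative side. Setting $L(\phi,(\mu,\nu)) = \E_\mu[\phi] + \E_\nu[1-\phi]$, the risk thus takes the bilinear saddle form
\[
R(\Pcal,\Qcal) = \inf_{\phi \in \Phi} \sup_{(\mu,\nu) \in \cconv(\Pcal) \times \cconv(\Qcal)} L(\phi,(\mu,\nu)).
\]

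The next step is to apply a minimax theorem (Sion's or Ky Fan's) to interchange the inf and sup. Here $L$ is linear in $\phi$ on the convex set $\Phi$, and linear and weak-$*$ continuous in $(\mu,\nu)$. The crucial ingredient is compactness: identifying $\ba$ with the dual of the sup-normed space of bounded measurable functions on $(\Omega,\Fcal)$, Banach--Alaoglu makes the unit ball of $\ba$ weak-$*$ compact, the set $\ba_1$ of finitely additive probability measures is a weak-$*$ closed convex subset of this ball, and hence $\cconv(\Pcal)$ and $\cconv(\Qcal)$ are weak-$*$ compact and convex. With the minimax hypotheses verified, one obtains
\[
R(\Pcal,\Qcal) = \sup_{(\mu,\nu) \in \cconv(\Pcal) \times \cconv(\Qcal)} \inf_{\phi \in \Phi} L(\phi,(\mu,\nu)).
\]
For fixed $(\mu,\nu)$ the inner infimum equals $1 + \inf_{\phi \in \Phi} \int \phi \, d(\mu-\nu) = 1 - \dist_\TV(\mu,\nu)$, by the standard dual formula for TV distance (equivalently, by the Jordan decomposition of the finitely additive signed measure $\mu - \nu$, recalled in~\eqref{eq:TV-dist-formula-proof-section}). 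Taking the supremum over $(\mu,\nu)$ then produces~\eqref{eq:minimax-risk-ba}.

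For achievability of the infimum defining $\dist_\TV(\cconv(\Pcal),\cconv(\Qcal))$, note that $(\mu,\nu) \mapsto \dist_\TV(\mu,\nu) = \sup_{\phi \in \Phi}(\E_\mu[\phi] - \E_\nu[\phi])$ is a supremum of weak-$*$ continuous affine functions, hence weak-$*$ lower semicontinuous; since $\cconv(\Pcal) \times \cconv(\Qcal)$ is weak-$*$ compact, the infimum is attained. Finally, the quantitative $\varepsilon$-equivalence is immediate, since a test $\phi$ with $\inf_{\nu \in \Qcal} \E_\nu[\phi] > \sup_{\mu \in \Pcal} \E_\mu[\phi] + \varepsilon$ is precisely one with $R(\phi) < 1 - \varepsilon$, so such a test exists iff $R(\Pcal,\Qcal) < 1 - \varepsilon$, which by~\eqref{eq:minimax-risk-ba} is equivalent to $\dist_\TV(\cconv(\Pcal),\cconv(\Qcal)) > \varepsilon$.

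The main obstacle I anticipate is the careful handling of the finitely additive setting: verifying that the dual identity $\sup_{\phi \in \Phi} \int \phi \, d(\mu - \nu) = \dist_\TV(\mu,\nu)$ extends verbatim from countably to finitely additive probability measures, and that $\ba_1$ indeed embeds $\Mcal_1$ while providing the weak-$*$ compactness that drives the minimax step. Once these foundational points are in place (to be treated in Section~\ref{sec:ba-and-proofs}), the minimax theorem applies in standard form, and the only ``deep'' analytic tool is Banach--Alaoglu.
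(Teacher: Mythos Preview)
Your proposal is correct and follows essentially the same approach as the paper: both apply a Ky~Fan/Sion minimax theorem to the bilinear payoff over $\Phi$ and the weak-$*$ compact set $\cconv(\Pcal)\times\cconv(\Qcal)$, using Banach--Alaoglu for compactness and the formula~\eqref{eq:TV-dist-formula-proof-section} to identify the inner optimum as $1-\dist_\TV(\mu,\nu)$. The only cosmetic differences are that the paper works with $F((\mu,\nu),\phi)=\E_\nu[\phi]-\E_\mu[\phi]$ rather than your $L$, and obtains attainment directly from Fan's theorem rather than via your separate lower-semicontinuity argument; both variants are equivalent.
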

We suspect that outside of special cases, such as the dominated setting of Theorem~\ref{T:classical}, the minimax risk may not in general be achieved by any test $\phi$.

The proof of the above result is based on a minimax theorem, as detailed in the following section.
We note that the choice of topology is a critical and subtle matter in this regard, because it affects both the compactness of sets and continuity of functions, required for invoking an appropriate minimax theorem. In Appendix~\ref{app:topology} we expand briefly on this, 
noting that the established literature contains instances where minimax theorems are applied incorrectly.

While Theorem~\ref{T_kraft} holds under weaker assumptions, it is not immediately apparent that it implies Theorem~\ref{T:classical}. The following result clarifies this.

\begin{proposition} \label{prop:ref-meas-tv-dist}
If $\Pcal$ and $\Qcal$ admit a common reference measure $\gamma$, then 
\[
\dist_\TV(\cconvtv(\Pcal), \cconvtv(\Qcal)) = \dist_\TV(\cconv(\Pcal), \cconv(\Qcal)).
\]
Moreover, the minimax risk is always attained by a minimax optimal test, so Theorem~\ref{T_kraft} implies~Theorem~\ref{T:classical}. 
\end{proposition}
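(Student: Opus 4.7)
The plan is to realize the set of tests as a weak-$*$ compact convex subset of $L^\infty(\gamma)$ and apply Sion's minimax theorem there; both parts of the proposition will drop out together.

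First I handle the attainment of the minimax risk. Since every $\mu \in \Pcal \cup \Qcal$ is absolutely continuous with respect to $\gamma$, the expectation $\E_\mu[\phi]$ depends only on the $\gamma$-a.e.\ equivalence class of $\phi$, so $\Phi$ may be identified with $\{\bar\phi \in L^\infty(\gamma) : 0 \le \bar\phi \le 1\}$. This set is weak-$*$ closed in the unit ball of $L^\infty(\gamma) = L^1(\gamma)^*$, hence weak-$*$ compact by Banach--Alaoglu. Writing $f_\mu = d\mu/d\gamma \in L^1(\gamma)$, each functional $\phi \mapsto \E_\mu[\phi] = \langle \phi, f_\mu\rangle$ is weak-$*$ continuous, so $R(\phi) = \sup_{\mu \in \Pcal}\langle \phi, f_\mu\rangle + \sup_{\nu \in \Qcal}\langle 1-\phi, f_\nu\rangle$ is a supremum of weak-$*$ continuous affine functionals, hence weak-$*$ lower semicontinuous. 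A lower semicontinuous function on a compact set attains its infimum, yielding a minimax optimal $\phi^* \in \Phi$.

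For the equality of TV distances, the inclusions $\cconvtv(\Pcal) \subseteq \cconv(\Pcal)$ and $\cconvtv(\Qcal) \subseteq \cconv(\Qcal)$ are immediate since the weak-$*$ topology is coarser than TV, so one direction of the inequality is clear. For the reverse I apply Sion's minimax theorem to the bilinear form $F(\phi,(\mu,\nu)) = \E_\mu[\phi] + \E_\nu[1-\phi]$ on $\Phi \times (\conv(\Pcal) \times \conv(\Qcal))$: $\Phi$ is weak-$*$ compact convex, $F(\cdot,(\mu,\nu))$ is weak-$*$ continuous affine by the observation above, and $F(\phi,\cdot)$ is affine. Taking the outer supremum first yields $R(\Pcal,\Qcal)$; the inner infimum equals $1 - \dist_\TV(\mu,\nu)$ by the classical Neyman--Pearson lemma applied to the dominated pair $\mu,\nu \ll \gamma$. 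Consequently
\begin{equation*}
R(\Pcal,\Qcal) \;=\; 1 - \dist_\TV(\conv(\Pcal), \conv(\Qcal)) \;=\; 1 - \dist_\TV(\cconvtv(\Pcal), \cconvtv(\Qcal)),
\end{equation*}
the last equality using invariance of TV distance under taking TV closures. Comparing with $R(\Pcal,\Qcal) = 1 - \dist_\TV(\cconv(\Pcal),\cconv(\Qcal))$ from Theorem~\ref{T_kraft} delivers the desired equality of TV distances.

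Finally, inserting these equalities into Theorem~\ref{T_kraft} immediately recovers~\eqref{eq:kraft-lecam} and~\eqref{eq:minimax-risk-classical}, while the test $\phi^*$ above furnishes the minimax optimum claimed in Theorem~\ref{T:classical}. The main technical point to handle with care is the verification of Sion's hypotheses, in particular the weak-$*$ continuity of $F(\cdot,(\mu,\nu))$, which hinges on $f_\mu - f_\nu$ lying in $L^1(\gamma)$---a step that would genuinely fail without the common dominating measure and that is precisely what keeps this argument confined to the dominated setting.
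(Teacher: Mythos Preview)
Your proof is correct, but the route differs from the paper's in one respect. For the equality of the two TV distances, the paper does not go through the minimax risk: it applies the Hahn--Banach separation theorem in $L^1(\gamma)$ to obtain a unit-norm $\psi \in L^\infty(\gamma)$ separating $\conv(\Pcal)-\conv(\Qcal)$ from the open ball of radius $2r$ (where $r = \dist_\TV(\conv(\Pcal),\conv(\Qcal))$), converts $\psi$ into a test $\phi = (1+\psi)/2$, and then extends the inequality $\E_\mu[\phi]-\E_\nu[\phi]\ge r$ to the weak-$*$ closures by continuity. Your argument instead applies Sion's minimax theorem on $\Phi\times(\conv(\Pcal)\times\conv(\Qcal))$ to identify $R(\Pcal,\Qcal)$ with $1-\dist_\TV(\conv(\Pcal),\conv(\Qcal))$ directly, and then matches this against Theorem~\ref{T_kraft}. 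Both approaches hinge on exactly the same weak-$*$ compactness of $\Phi$ in $L^\infty(\gamma)$; yours has the advantage that the attainment of the minimax risk and the distance equality fall out of a single minimax calculation, while the paper's Hahn--Banach argument is self-contained for the distance equality (it does not invoke Theorem~\ref{T_kraft} at that step). The attainment part of your proof is essentially identical to the paper's. One cosmetic remark: the identity $\inf_{\phi\in\Phi}\bigl(\E_\mu[\phi]+\E_\nu[1-\phi]\bigr)=1-\dist_\TV(\mu,\nu)$ follows immediately from~\eqref{eq:TV-dist-formula-proof-section} for any $\mu,\nu\in\Mcal_1$ and does not actually require domination or Neyman--Pearson.
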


The above result holds despite the fact that $\cconvtv(\Pcal) \neq \cconv(\Pcal)$ in general.  
{\color{black} To see that the two closures can be different, we consider the sample space $\N$, put
$\Pcal=\{\delta_n \colon n \in \N\}$, and note that $\cconv(\Pcal)$, which is weak-$*$ compact, must include some limit point of the sequence of point masses $\delta_n$. 
We pick such a limit point and denote it by $\delta_\infty$ because it is ``supported at infinity'', in that it assigns zero mass to any finite subset of $\N$. In particular, this implies that $\delta_\infty$ is purely finitely additive since any nonzero countably additive component would have to charge some finite subsets of $\N$.} (We may also note this from Example~\ref{ex:johannes-beats-gemini}, where $\cconv(\Pcal)$ contains an equal mixture of $\delta_0$ and $\delta_\infty$.)

Thus, Theorem~\ref{T_kraft} appears to be fundamental since it holds without any (statistically relevant) restrictions, and is the main contribution of this paper. We also present another result that is of independent interest, because it involves the usual topology of weak convergence and does not involve the weak-$*$ closure in $\ba$.

{\color{black}
\begin{remark}
Recall that if $\Omega$ is a metric space and $C_b$ denotes the set of continuous bounded functions on $\Omega$, the \emph{topology of weak convergence} on $\Mcal_1$ is the weakest one for which $\mu \mapsto \E_\mu[f]$ is continuous for every $f \in C_b$. Equivalently, a sequence $\mu_n$ converges weakly to $\mu$ if $\E_{\mu_n}[f]$ converges to $\E_\mu[f]$ for every $f \in C_b$.
Thanks to Prokhorov's theorem (see \citet[Theorem~5.1]{MR1700749}) one can check weak compactness by checking weak closedness and tightness.
This topology differs from the weak-$*$ topology used above both in the choice of test functions (continuous instead of measurable) and because the underlying set is different ($\Mcal_1$ instead of $\ba$); see Section~\ref{sec:ba-and-proofs}.
\end{remark}
}

\begin{theorem}\label{th:weakly-compact}
Assume $\Omega$ is a metric space and $\Fcal$ the Borel $\sigma$-algebra.
If $\Pcal$ and $\Qcal$ are convex and weakly compact (in the usual sense), then
{\color{black}
\begin{align*}
R(\Pcal,\Qcal) &= \inf_{\phi \in \Phi \cap C_b} R(\phi) \\
&= 1 - \dist_\TV(\Pcal, \Qcal) \\
&= 1 - \dist_\TV(\cconv(\Pcal), \cconv(\Qcal)).
\end{align*}
}
In particular, \eqref{eq:kraft-lecam} holds and the test $\phi$ can be taken to be continuous therein, and 
the infimum in the total-variation distance $d_\TV$ is achieved by some $\mu \in \Pcal, \nu \in \Qcal$. 
\end{theorem}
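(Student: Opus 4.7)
My plan is to combine Theorem~\ref{T_kraft} with a minimax argument restricted to continuous tests. The key analytic input is the dual representation
\[
\dist_\TV(\mu,\nu)=\sup_{f\in C_b,\ \|f\|_\infty\le 1}\tfrac12\int f\,\d(\mu-\nu),
\]
valid for Borel probability measures $\mu,\nu$ on a metric space. This refines the standard formula (supremum over all bounded measurable $f$) because Borel probability measures on metric spaces are regular, which makes $C_b$ dense in $L^1(\mu+\nu)$; truncating any approximant to $[-1,1]$ preserves membership in $C_b$.

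Next I would apply Sion's minimax theorem to the bilinear form $G(\phi,(\mu,\nu))=\E_\mu[\phi]+\E_\nu[1-\phi]$ on $(\Phi\cap C_b)\times(\Pcal\times\Qcal)$. Here $\Phi\cap C_b$ is convex, $\Pcal\times\Qcal$ is convex and weakly compact, and for each $\phi\in C_b$ the map $(\mu,\nu)\mapsto G(\phi,\mu,\nu)$ is weakly continuous and affine. Endowing $\Phi\cap C_b$ with the TVS topology of pointwise convergence on $\Pcal\cup\Qcal$ makes $G(\cdot,\mu,\nu)$ continuous for each $(\mu,\nu)$, so Sion yields
\[
\inf_{\phi\in\Phi\cap C_b}R(\phi)=\inf_{\phi\in\Phi\cap C_b}\sup_{(\mu,\nu)}G=\sup_{(\mu,\nu)}\inf_{\phi\in\Phi\cap C_b}G.
\]
The change of variable $\phi=(1-f)/2$ bijects $\Phi\cap C_b$ with the unit ball of $C_b$, so the duality above gives $\inf_{\phi\in\Phi\cap C_b}G(\phi,\mu,\nu)=1-\dist_\TV(\mu,\nu)$; maximizing over $(\mu,\nu)\in\Pcal\times\Qcal$ then yields $\inf_{\phi\in\Phi\cap C_b}R(\phi)=1-\dist_\TV(\Pcal,\Qcal)$.

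Finally I would close the chain using Theorem~\ref{T_kraft}. Trivially $\inf_{\phi\in\Phi\cap C_b}R(\phi)\ge R(\Pcal,\Qcal)$; Theorem~\ref{T_kraft} gives $R(\Pcal,\Qcal)=1-\dist_\TV(\cconv(\Pcal),\cconv(\Qcal))$; and the inclusions $\Pcal\subseteq\cconv(\Pcal)$, $\Qcal\subseteq\cconv(\Qcal)$ imply $\dist_\TV(\cconv(\Pcal),\cconv(\Qcal))\le\dist_\TV(\Pcal,\Qcal)$. These three facts together force equality throughout the displayed chain. For the attainment claim, the $C_b$-duality displayed at the outset exhibits $\dist_\TV$ as a supremum of weakly continuous functions on $\Pcal\times\Qcal$, hence weakly lower semicontinuous; weak compactness of $\Pcal\times\Qcal$ produces minimizers $\mu^*\in\Pcal$, $\nu^*\in\Qcal$. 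The main obstacle I anticipate is verifying Sion's hypotheses carefully on the infinite-dimensional, nonmetrizable space $\Phi\cap C_b$---specifically, choosing a topology that is vectorial, Hausdorff where needed, and fine enough to make $G(\cdot,\mu,\nu)$ lower semicontinuous---together with justifying the $C_b$-restricted duality for $\dist_\TV$ on general (possibly non-Polish, non-tight) metric spaces.
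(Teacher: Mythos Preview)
Your proposal is correct and follows essentially the same route as the paper: the $C_b$-restricted duality for $\dist_\TV$ is exactly Lemma~\ref{lem:TV-Cb}, the minimax step is the paper's application of Fan's theorem with the compact convex factor $\Pcal\times\Qcal$ and the merely convex factor $\Phi_c$, and the closing chain of inequalities via Theorem~\ref{T_kraft} and the inclusion $\Pcal\subset\cconv(\Pcal)$ is identical. Your anticipated obstacle dissolves: Fan's theorem (and the relevant form of Sion's) requires compactness and semicontinuity only on the $(\mu,\nu)$ side, with no topology needed on $\Phi\cap C_b$ beyond its convexity, so the paper simply never introduces one.
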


One can also rephrase the above result in the following manner: if $\conv(\Pcal)$ and $\conv(\Qcal)$ are weakly compact, then $R(\Pcal,\Qcal) =  1 - \dist_\TV(\conv(\Pcal), \conv(\Qcal))$.
Note also that above, $\Pcal$ and $\Qcal$ need not have a reference measure (Example~\ref{ex:bounded-means})
and so Theorem~\ref{th:weakly-compact} itself is incomparable to Theorem~\ref{T:classical}.

\begin{remark}
In the context of Theorem~\ref{th:weakly-compact},
    let us first note that in general, $\Pcal \subsetneq \cconv(\Pcal)$. Consider $\Omega=[-1,1]$ and $\Pcal$ to be the set of all symmetric laws around 0, which is convex and weakly compact. Now, consider the sequence $\mu_n = \mathrm{Uni}[-1/n,1/n]$, which converges weakly to $\delta_0$, but does not converge to $\delta_0$ in the weak-$*$ topology. (Indeed, the test function $\phi(x)=0$ for $x \neq 0$ and $\phi(x)=1$ for $x=0$ satisfies $\E_{\mu_n}[\phi]=0$, but $\E_{\delta_0}[\phi]=1$, and such discontinuous test functions are permissible in the weak-$*$ topology.) {\color{black}However, by compactness the sequence $\mu_n$ does have a subnet that converges to an element $\mu_\infty$ in $\ba$. This limit assigns mass zero to $\{0\}$ but mass one to any open interval containing $0$. In particular, this shows that $\mu_\infty \notin \Mcal_1$ and that $\Pcal \subsetneq \cconv(\Pcal)$ as claimed.} Despite this, the TV distance in the above theorem is unaffected by taking the closure in $\ba$ because in a metric space with the Borel $\sigma$-algebra, $\dist_\TV(\mu,\nu) =   \sup_{\phi \in \Phi \cap C_b} (\E_\mu[\phi] - \E_\nu[\phi])$; see Lemma~\ref{lem:TV-Cb}. That is, we only need to consider  \emph{continuous} $[0,1]$-bounded functions to compute the TV distance. In fact, Lemma~\ref{lem:TV-Cb} is the only reason we need a metric, rather than merely a topology, in Theorem~\ref{th:weakly-compact}.
\end{remark}

The weak-$*$ closure is always at least as large as the TV closure, so the right-hand side of \eqref{eq:minimax-risk-ba} is always at least as large as the right-hand side of \eqref{eq:minimax-risk-classical}. Moreover,
there exist problems (like Example~\ref{eg:van-de-waart}) where
\[
 R(\Pcal,\Qcal) > 1 - \dist_\TV(\cconvtv(\Pcal), \cconvtv(\Qcal)).
\]
Hence, we see that
\eqref{eq:minimax-risk-classical} is overly optimistic about the minimax risk, in the sense that problems without a reference measure can be harder than what \eqref{eq:minimax-risk-classical}  may suggest. Even though we have not shown that a minimax optimal test always exists, the following  result is still practically useful to verify that a candidate test is, in fact, minimax optimal.

\begin{corollary}\label{cor:saddle}
For any $\Pcal,\Qcal \subset \Mcal_1$, and any $\phi \in \Phi, \mu \in \cconv(\Pcal),\nu \in \cconv(\Qcal)$, one has the weak duality inequality
\[
R(\phi) \geq 1-\dist_\TV(\mu,\nu).
\]
Thus, if one can exhibit $\phi^* \in \Phi,\mu^* \in \cconv(\Pcal),\nu^* \in \cconv(\Qcal)$ such that
\[
R(\phi^*) = 1 - \dist_\TV(\mu^*,\nu^*),
\]
then $\phi^*$ is minimax optimal, and $\mu^*,\nu^*$ are the TV-closest pair  between $\cconv(\Pcal), \cconv(\Qcal)$.
\end{corollary}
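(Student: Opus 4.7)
The plan is to deduce the corollary directly from Theorem~\ref{T_kraft} by extending the elementary weak duality inequality \eqref{eq:weak-duality} from countably additive measures to arbitrary elements of $\cconv(\Pcal)$ and $\cconv(\Qcal)$, and then running a short sandwich argument. I would begin by recording the variational representation $\dist_\TV(\mu,\nu) = \sup_{\psi \in \Phi}\bigl(\E_\mu[\psi] - \E_\nu[\psi]\bigr)$, which holds for any pair of finitely additive probability measures $\mu,\nu \in \ba$ (this being the definition of TV already adopted by the paper, cf.~\eqref{eq:TV-dist-formula-proof-section}). Applying this to the given test $\phi \in \Phi$ yields, for arbitrary finitely additive probabilities $\mu,\nu$,
\[
\E_\mu[\phi] + \E_\nu[1-\phi] \;=\; 1 - \bigl(\E_\nu[\phi] - \E_\mu[\phi]\bigr) \;\geq\; 1 - \dist_\TV(\mu,\nu).
\]

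Next I would upgrade the pointwise bound to one on the worst-case risk. Because $\phi$ is bounded and measurable, the functional $\mu \mapsto \E_\mu[\phi]$ is affine and weak-$*$ continuous on $\ba$, so that
\[
\sup_{\mu \in \Pcal}\E_\mu[\phi] \;=\; \sup_{\mu \in \conv(\Pcal)}\E_\mu[\phi] \;=\; \sup_{\mu \in \cconv(\Pcal)}\E_\mu[\phi] \;\geq\; \E_{\mu^*}[\phi]
\]
for any $\mu^* \in \cconv(\Pcal)$, and analogously $\sup_{\nu \in \Qcal}\E_\nu[1-\phi] \geq \E_{\nu^*}[1-\phi]$ for any $\nu^* \in \cconv(\Qcal)$. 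Adding these two inequalities and combining with the displayed weak duality bound yields
\[
R(\phi) \;\geq\; \E_{\mu^*}[\phi] + \E_{\nu^*}[1-\phi] \;\geq\; 1 - \dist_\TV(\mu^*,\nu^*),
\]
which is precisely the weak duality assertion of the corollary.

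For the saddle-point conclusion, I would perform a two-sided sandwich using Theorem~\ref{T_kraft}. Since $\mu^* \in \cconv(\Pcal)$ and $\nu^* \in \cconv(\Qcal)$, the definition of the infimum gives $\dist_\TV(\mu^*,\nu^*) \geq \dist_\TV(\cconv(\Pcal),\cconv(\Qcal))$, hence by the assumed equality
\[
R(\phi^*) \;=\; 1-\dist_\TV(\mu^*,\nu^*) \;\leq\; 1 - \dist_\TV(\cconv(\Pcal),\cconv(\Qcal)) \;=\; R(\Pcal,\Qcal),
\]
the last identity being \eqref{eq:minimax-risk-ba}. The reverse inequality $R(\phi^*) \geq \inf_{\phi}R(\phi) = R(\Pcal,\Qcal)$ is immediate, so both chains are tight, yielding simultaneously $R(\phi^*) = R(\Pcal,\Qcal)$ (minimax optimality of $\phi^*$) and $\dist_\TV(\mu^*,\nu^*) = \dist_\TV(\cconv(\Pcal),\cconv(\Qcal))$ (that $(\mu^*,\nu^*)$ is a TV-closest pair).

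No substantive obstacle arises; the argument is a standard weak-duality/saddle-point corollary of the main theorem. The single point requiring care is making sure that the variational formula for $\dist_\TV$ carries over verbatim to the finitely additive setting so that the pointwise weak duality bound applies to $\mu^* \in \cconv(\Pcal)$ and $\nu^* \in \cconv(\Qcal)$, but the paper has already committed to precisely this definition of TV on $\ba$, so no additional work is needed beyond invoking Theorem~\ref{T_kraft}.
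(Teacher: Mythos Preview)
Your proof is correct. For the weak duality inequality you take a slightly more direct route than the paper: the paper deduces $R(\phi) \geq 1 - \dist_\TV(\mu,\nu)$ by invoking \eqref{eq:minimax-risk-ba} (so via the full strength of Theorem~\ref{T_kraft}, i.e., the minimax identity), whereas you establish it from first principles using only the variational formula \eqref{eq:TV-dist-formula-proof-section} and the weak-$*$ continuity of $\mu \mapsto \E_\mu[\phi]$. Your argument thus makes explicit that the weak duality half is elementary and does not require Fan's minimax theorem, while the saddle-point conclusion genuinely needs \eqref{eq:minimax-risk-ba}; for that second part your sandwich argument coincides with the paper's.
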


\begin{proof}
The first display is an immediate consequence of~\eqref{eq:minimax-risk-ba}. The second display implies
$
R(\Pcal,\Qcal) \leq R(\phi^*) = 1 - \dist_\TV(\mu^*,\nu^*) \leq 1 - \dist_\TV(\cconv(\Pcal), \cconv(\Qcal)),
$
but since the first and last quantities are equal by~\eqref{eq:minimax-risk-ba}, equality must hold throughout.
\end{proof}

An interesting nonparametric example where the above corollary proves particularly useful is the following.

\begin{example}\label{ex:bounded-means}
Let $\Pcal$ be all probability measures on $[0,1]$ whose mean is at most $m_1$, and let $\Qcal$ be those with mean at least $m_2$, where $0 < m_1 < m_2 < 1$. These sets do not have a dominating reference measure, but are weakly compact and convex, and so Theorem~\ref{th:weakly-compact} applies, but we can say more. 
Consider the test $\phi^*(x)=x$ and let $\mu^*,\nu^*$ be Bernoullis with means $m_1,m_2$. Then $R(\phi^*)$ is easily seen to equal $m_1 + (1-m_2)$, which equals $1-\dist_\TV(\mu^*,\nu^*)$. Thus, Corollary~\ref{cor:saddle} implies that $\phi^*$ is minimax optimal and the two Bernoullis are the TV-closest pair.
Thus, here, the minimax optimal test exists, despite Theorem~\ref{th:weakly-compact} not guaranteeing its existence.
\end{example}

{\color{black}There are other recent attempts we are aware of to handle nondominated hypotheses. For instance,~\cite{liebrich2022model} consider the class of so-called \emph{supported} $\Pcal,\Qcal$, and following our first preprint,~\citet[Theorem~52]{klein2026can} consider \emph{pre-Hahn localizable} $\Pcal,\Qcal$ (with some further assumptions). These works aim to restrict $\Pcal,\Qcal$ sufficiently so that, while being nondominated, the corresponding strong duality  does not need to consider finitely additive measures. We note, of course, that Theorem~\ref{th:weakly-compact} also achieves this goal.}
The last result that we wish to highlight is that of~\cite{Huber:Strassen:1973}, who prove that if the Choquet capacities induced by $\Pcal,\Qcal$ are \emph{two-alternating}, then a \emph{least favorable distribution pair} exists, such that a likelihood ratio test between them is minimax optimal. This result applies to both parametric settings which may not be compact (e.g., testing unit variance Gaussians with nonpositive mean against those with mean larger than one), as well as nonparametric settings without a reference measure (e.g., when the null and alternative are each described by a total-variation ball around a simple hypothesis). Nevertheless, it does not yield a necessary and sufficient condition for testability as Theorem~\ref{T_kraft} does, just a (very useful) sufficient one.

As a final note, we highlight that Le~Cam appeared to be fully aware of the deficiencies of Theorem~\ref{T:classical}. However, his preferred solution was highly nonstandard. 
Notice that Theorem~\ref{T_kraft} alters the right-hand side of  \eqref{eq:kraft-lecam} but leaves the left-hand side unchanged. Le~Cam's book~\citeyearpar[pg.\ 476]{le2012asymptotic} presents a statement that changes the left-hand side of  \eqref{eq:kraft-lecam}, but leaves the right-hand side unchanged. However, while mathematically correct, its statistical implications are limited, as also pointed out by~\cite{van2002statistical}.
We nevertheless provide it below for completeness.

Write $\Mcal$ for the set of finite signed countably additive measures on $\Omega$, which is a Banach space when equipped with the TV-norm, and let $\Mcal'$ denote its dual {\color{black}space of continuous linear functionals}.
A \emph{generalized test} is an element $\phi \in \Mcal'$ such that $\phi(\mu) \in [0,1]$ for all $\mu \in \Mcal_1$. (The reader is warned that Le~Cam just called these \emph{tests}, which may lead to confusion unless one very carefully follows his definitions.)

\begin{theorem}[\cite{le2012asymptotic}]\label{th:gen-test}
Let $\Pcal$ and $\Qcal$ be arbitrary subsets of $\Mcal_1$, and let $\varepsilon \ge 0$. Then we have the following equivalence:
{\color{black}
\begin{align*}
\exists \text{ generalized test } \phi: & \inf_{\nu \in \Qcal}  \phi(\nu) > \sup_{\mu \in \Pcal} \phi(\mu)  + \varepsilon \\
&\iff \dist_\TV(\conv(\Pcal), \conv(\Qcal)) > \varepsilon.
\end{align*}
}
\end{theorem}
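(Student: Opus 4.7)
The plan is a Hahn--Banach separation argument in the Banach space $(\Mcal, \|\cdot\|)$, where $\|\lambda\| := |\lambda|(\Omega)$ is the total-variation norm on signed countably additive measures; for probabilities $\mu,\nu$ this satisfies $\|\mu - \nu\| = 2\,\dist_\TV(\mu,\nu)$. The reason this theorem is easier than Theorem~\ref{T_kraft} is that we may use any functional in the full dual $\Mcal'$, which is rich enough that no topological closure of the convex hulls is required for separation to succeed.

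For the direction $(\Leftarrow)$, assume $\dist_\TV(\conv(\Pcal),\conv(\Qcal)) > \varepsilon$ and pick $\delta > 0$ with $\|\mu - \nu\| \ge 2\varepsilon + 2\delta$ for all $\mu \in \conv(\Pcal), \nu \in \conv(\Qcal)$. The open convex set $A := \conv(\Pcal) + \{\lambda \in \Mcal : \|\lambda\| < 2\varepsilon + 2\delta\}$ is disjoint from $\conv(\Qcal)$, so Hahn--Banach in $(\Mcal, \|\cdot\|)$ produces $\psi \in \Mcal'$ with $\|\psi\|_{\Mcal'} = 1$ and
\[
\inf_{\nu \in \conv(\Qcal)} \psi(\nu) - \sup_{\mu \in \conv(\Pcal)} \psi(\mu) \ge 2\varepsilon + 2\delta.
\]
Since $\|\psi\|_{\Mcal'} = 1$ and every probability has unit TV norm, $\psi(\Mcal_1) \subset [-1,1]$. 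Observing that the total-mass functional $\mathbf{1}(\lambda) := \lambda(\Omega)$ also lies in $\Mcal'$ (with norm one) and equals $1$ on every element of $\Mcal_1$, the affine combination $\phi := \tfrac12(\psi + \mathbf{1}) \in \Mcal'$ maps $\Mcal_1$ into $[0,1]$, and for every $\mu \in \Pcal, \nu \in \Qcal$,
\[
\phi(\nu) - \phi(\mu) \;=\; \tfrac12\bigl(\psi(\nu) - \psi(\mu)\bigr) \;\ge\; \varepsilon + \delta \;>\; \varepsilon,
\]
so $\phi$ is the desired generalized test.

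For the direction $(\Rightarrow)$, the key estimate is $|\phi(\nu) - \phi(\mu)| \le \dist_\TV(\mu,\nu)$ for any generalized test $\phi$ and any $\mu,\nu \in \Mcal_1$: Jordan-decomposing $\nu - \mu = \lambda^+ - \lambda^-$ yields $\lambda^+(\Omega) = \lambda^-(\Omega) =: t = \dist_\TV(\mu,\nu)$ because $\mu,\nu$ have equal total mass, so $\nu - \mu = t(\mu_+ - \mu_-)$ for probabilities $\mu_\pm$, and linearity combined with $\phi(\mu_\pm) \in [0,1]$ gives the bound. The hypothesis passes from $\Pcal, \Qcal$ to $\conv(\Pcal), \conv(\Qcal)$ by linearity of $\phi$, and combined with the estimate above yields $\dist_\TV(\mu,\nu) > \varepsilon$ for every $\mu \in \conv(\Pcal), \nu \in \conv(\Qcal)$. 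The most delicate point is really in the $(\Leftarrow)$ direction: one must convert the raw Hahn--Banach functional into a functional whose restriction to probabilities lies in $[0,1]$, which is exactly what the observation $\mathbf{1} \in \Mcal'$ buys us; thereafter everything is arithmetic.
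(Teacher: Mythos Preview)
Your proof is correct. Both directions are handled cleanly: the $(\Leftarrow)$ direction via Hahn--Banach separation of $\conv(\Pcal)$ thickened by an open TV-ball from $\conv(\Qcal)$, then the affine shift $\phi=\tfrac12(\psi+\mathbf 1)$ to force $\phi(\Mcal_1)\subset[0,1]$; and the $(\Rightarrow)$ direction via the Jordan decomposition to bound $|\phi(\nu)-\phi(\mu)|$ by $\dist_\TV(\mu,\nu)$ for any generalized test.

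The paper does not actually supply a proof of Theorem~\ref{th:gen-test}; it merely states the result with attribution to \cite{le2012asymptotic} and notes in the surrounding discussion that Le~Cam's argument uses a minimax theorem. So there is no in-paper proof to compare directly against. That said, your Hahn--Banach approach is essentially the same machinery the paper deploys for the closely analogous Proposition~\ref{prop:ref-meas-tv-dist}: there the authors separate $\conv(\Pcal)-\conv(\Qcal)$ from an open $L^1$-ball, obtain a unit-norm functional $\psi\in L^\infty(\gamma)$, and set $\phi=(1+\psi)/2$. Your proof is the exact analogue with $(\Mcal,\|\cdot\|_\TV)$ and its dual $\Mcal'$ replacing $L^1(\gamma)$ and $L^\infty(\gamma)$, and with the total-mass functional $\mathbf 1$ playing the role of the constant function $1$. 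The alternative minimax route (as used for Theorem~\ref{T_kraft}) would also work here, applying Fan's theorem with the compact convex set being the unit ball of generalized tests in the weak-$*$ topology of $\Mcal'$; this would yield the stronger equality for the minimax risk directly, whereas your separation argument gives the equivalence as stated.
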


As Le~Cam had also noted, these generalized tests may not correspond to bounded measurable functions, and thus need not be possible to evaluate using observed samples{\color{black}; certain additional information may be required}. The tests $\phi$ that we consider also live in $\mathcal{M}'$, but are in fact given by bounded measurable functions (by definition). Thus the above result, while correct, has limited practical applicability, unlike our main Theorem~\ref{T_kraft}. We discuss this point in detail in Appendix~\ref{sec:gen-tests}.

Once more, Le~Cam was aware of this drawback of his mathematically general (but statistically opaque) formulation in Theorem~\ref{th:gen-test}. In a remark following Lemma~1 on page~476 of~\cite{le2012asymptotic}, he even suggested a way of only working with tests that are bounded measurable functions. Following his sentence ``\emph{One case which occurs often in the study of robust procedures
is as follows...}'', he goes on to describe a proof strategy for a result similar to Theorem~\ref{T_kraft}. However, unfortunately, Le~Cam only treats this in a passing remark, that is easily missed (or unappreciated) due to its abstractness. A clearly stated theorem, which actually holds with no unnecessary restrictions on the tests or hypotheses (like Theorem~\ref{T_kraft}), was not provided. It remains unclear to us whether Le~Cam did not realize that what he was proposing in that remark were the seeds of a complete solution, or whether he realized it but simply chose not to emphasize it (despite its apparent importance) for reasons of personal preference. Such ponderings aside, we believe it to be the case that most statisticians, even after parsing his remark, would not be able to clearly  answer the question posed at the start of the abstract, motivating us to write this paper to settle the issue.

We end the introduction by noting that there is much precedent for the use of finitely additive measures in statistics, dating back nearly a century. An early proponent was de~Finetti (circa 1930), whose push for finite additivity in subjective Bayesian probability continues to be the topic of much discussion today; see~\cite{bingham2010finite,regazzini2013origins,seidenfeld2025finite} and references therein. Another prominent book to employ finite additivity is that of~\cite{dubins2014gamble}, who claim that many of their theorems are more general and easier to prove when working with finitely additive measures. 

We find it critical to point out that this present paper is very different from the above in its use of finitely additive measures. Our paper does not seek to promote or defend the use of finite additivity as an axiom or mathematical tool of convenience. We do not take a position on this issue. 
Our mathematical formalism and setup, from questions asked to assumptions made, are about countably additive probability measures, as the currently most prevalent model in probability and statistics. 
We did make a significant effort to stay in the realm of countable additivity, but found that there is no escape: {\color{black} least favorable distribution pairs that witness the minimax risk can in fact have nonzero finitely additive components;} 
see e.g.\ Example~\ref{ex:251212}.
Thus finite additivity appears as a fundamental and unavoidable mathematical consequence of providing a complete answer to a question about countably additive probability, very different from the aforementioned works of de~Finetti, Dubins--Savage, and others.

The rest of this paper is organized as follows. 
    Section~\ref{sec:ba-and-proofs} presents the mathematical background to understand the \emph{weak-$*$ closure in $\ba$}, and the proofs of all the results stated above.
    Section~\ref{sec:singleton} discusses the interesting case of a singleton $\Qcal=\{\nu\}$ and its relation to recent work.
    Appendix~\ref{app:topology} elaborates on some subtle issues related to the choice of topology.
    Appendix~\ref{sec:gen-tests} discusses Le Cam's generalized tests in more detail.
Appendix~\ref{A:measure-theoretic} contains some auxiliary technical results.
Appendix~\ref{S_proof_T_Peff} proves Theorem~\ref{T_Peff}, which is a result relating the closed convex hull in $\ba$ to the \emph{effective null} of $\Pcal$, another fundamental object from recent work.
{\color{black}Lastly, Appendix~\ref{app:practical-implications} outlines some consequences of our work for statistical applications, including a convex programming representation of the minimax risk.}

\section{Finite additivity and main proofs} \label{sec:ba-and-proofs}

Our goal here is to prove the results presented in Section~\ref{sec:intro} and, in the process of doing so, review the required mathematical machinery and elucidate the role of finitely additive measures. General references on this topic include \cite{MR45194,MR751777}.
Recall that we have fixed a measurable space $(\Omega, \Fcal)$, and {\color{black}we let $\Lcal$ denote the set of bounded measurable functions on this space.}

\begin{definition}
\label{def:tv}
A \emph{bounded finitely additive measure} is a real-valued set function $\mu$ on $\Fcal$ that is (i) \emph{additive}, $\mu(A \cup B) = \mu(A) + \mu(B)$ for all disjoint sets $A, B \in \Fcal$, and (ii) \emph{bounded} in total-variation norm, 
$\|\mu\|_\TV = \sup_\pi \sum_{A \in \pi} |\mu(A)| < \infty$, where the supremum ranges over all finite measurable partitions of $\Omega$.
The space of all such $\mu$ is denoted by $\ba$, the subset taking nonnegative values by $\ba_+$, and the further subset with unit mass by $\ba_1$. Thus $\ba_1$ is the space of finitely additive probability measures, also known as \emph{probability charges}.
\end{definition}

Since countable additivity implies finite additivity, one has $\Mcal_1 \subset \ba_1$, and the inclusion is strict as soon as $\Fcal$ is infinite.
Every $\mu \in \ba_+$ admits a unique decomposition $\mu = \mu_c + \mu_p$ into a countably additive part $\mu_c \in \Mcal_+$ (the set of nonnegative measures in $\Mcal$) and a \emph{purely finitely additive} part $\mu_p \in \ba_+$. That $\mu_p$ is purely finitely additive means that any countably additive $\nu$ such that $0 \le \nu \le \mu_p$ (setwise) must equal 0. 

The integral $\int_\Omega f d\mu$, or $\E_\mu[f]$ in probabilistic notation, is defined for any $f \in \Lcal$ and any $\mu \in \ba$, and is linear in $f$ and $\mu${\color{black}; see \citet[Section~2]{MR45194}}. Limit theorems such as the dominated and monotone convergence theorems need not hold for finitely additive integration, but one does have the bound $|\E_\mu[f]| \le \|f\|_\infty \|\mu\|_\textnormal{TV}$, where $\|f\|_\infty = \sup_{\omega \in \Omega} |f(\omega)|$ is the supremum norm.
This leads to the identification of $\ba$ as the dual of the Banach space $(\Lcal,\|\cdot\|_\infty)$.
 
Like the dual of any Banach space, $\ba$ can be equipped with the \emph{weak-$*$ topology} $\sigma(\ba,\Lcal)$, defined as the weakest topology such that the linear functionals $\mu \mapsto \E_\mu[f]$ are continuous for all $f \in \Lcal$.
{\color{black}Equivalently, a net $\mu_\alpha$ converges to $\mu$ in the weak-$*$ topology if $\E_{\mu_\alpha}[f]$ converges to $\E_\mu[f]$ for every $f \in \Lcal$.
We write $\cconv(\Pcal)$ for the weak-$*$ closure in $\ba$ of $\conv(\Pcal)$, and similarly for other sets.} 
The Banach--Alaoglu theorem implies that the set $\ba_1$ is weak-$*$ compact, and then so is any weak-$*$ closed subset of $\ba_1$. In particular, this is the case for $\cconv(\Pcal)$ and $\cconv(\Qcal)$.
It is this compactness property under the weak-$*$ topology that explains the appearance of the space $\ba$ in Theorem~\ref{T_kraft}.

The compactness is crucial because the proof of Theorem~\ref{T_kraft} relies on the following special case of a minimax theorem due to 
\citet[Theorem~2]{Fan:1953}.

\begin{theorem}
Let $X$ be a compact convex subset of a Hausdorff topological vector space, and $Y$ a convex subset of a vector space. Let $F$ be a real-valued function on $X \times Y$ such that $F(x,y)$ is lower semicontinuous and convex in $x$ for each fixed $y \in Y$, and concave in $y$ for each fixed $x \in X$. Then
\[
\inf_{x \in X} \sup_{y \in Y} F(x,y) = \sup_{y \in Y} \inf_{x \in X} F(x,y),
\]
where the infima are attained thanks to compactness of $X$ and lower semicontinuity in $x$ of $F(x,y)$ and $\sup_{y \in Y} F(x,y)$.
\end{theorem}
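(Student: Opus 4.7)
The plan is to prove the nontrivial inequality $\inf_{x \in X} \sup_{y \in Y} F(x,y) \le \sup_{y \in Y} \inf_{x \in X} F(x,y) =: v^\star$; the reverse is automatic. We may assume $v^\star$ is finite (the other cases are trivial: $v^\star = +\infty$ makes the inequality vacuous, and $v^\star = -\infty$ is excluded because $F(\cdot,y)$ is LSC on the compact set $X$). For each $y \in Y$ introduce the sublevel set $A_y = \{x \in X : F(x,y) \le v^\star\}$. It is closed in $X$ by lower semicontinuity of $F(\cdot,y)$ and nonempty because $F(\cdot,y)$ attains its infimum on the compact $X$ at a value not exceeding $v^\star$. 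Any point $x^\star \in \bigcap_{y \in Y} A_y$ satisfies $\sup_{y \in Y} F(x^\star, y) \le v^\star$, which gives the theorem. Hence, by compactness of $X$, it suffices to establish the finite intersection property of the family $\{A_y\}_{y \in Y}$.

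The finite intersection property is the core technical step. Given $y_1, \ldots, y_n \in Y$, I would argue by contradiction: suppose no $x^\dagger \in X$ satisfies $F(x^\dagger, y_i) \le v^\star$ for every $i$. Since $x \mapsto \max_i F(x, y_i)$ is LSC on the compact $X$, its infimum is attained and strictly exceeds $v^\star$, yielding some $\delta > 0$ with $\max_i F(x, y_i) \ge v^\star + \delta$ for all $x \in X$. Consider the set
\[
C = \{(F(x, y_1), \ldots, F(x, y_n)) : x \in X\} + \R_{\ge 0}^n \;\subset\; \R^n,
\]
whose convexity follows from convexity of each $F(\cdot, y_i)$ together with convexity of $X$. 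By construction $C$ is disjoint from the open convex set $(v^\star + \delta)\mathbf{1} - \R_{>0}^n$, so Hahn--Banach separation in $\R^n$ furnishes weights $\lambda \in \Delta_n$ with $\sum_i \lambda_i F(x, y_i) \ge v^\star + \delta$ for every $x \in X$. Concavity of $F(x, \cdot)$ then propagates the bound to $F(x, \sum_i \lambda_i y_i) \ge v^\star + \delta$, and taking the infimum over $x$ contradicts the definition of $v^\star$, since $\sum_i \lambda_i y_i \in Y$ by convexity of $Y$.

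I expect the most delicate step to be converting the raw Hahn--Banach functional into genuinely nonnegative, normalized simplex weights. The clean route is to note that any functional bounded below on $C$ must be nonnegative on its recession cone $\R_{\ge 0}^n$, which forces the coordinates of $\lambda$ to be nonnegative; since $\lambda \ne 0$, normalization by $\sum_i \lambda_i > 0$ lands in $\Delta_n$. The remaining attainment assertions in the theorem are then routine by-products of the hypotheses: $\sup_{y \in Y} F(\cdot, y)$ is LSC as a supremum of LSC functions and hence attains its infimum on the compact $X$, and likewise $\inf_{x \in X} F(x, y)$ is attained for each fixed $y$. A less direct alternative would be to invoke the KKM lemma on sublevel sets of $G(x,\lambda) = F(x, \sum_i \lambda_i y_i)$, but the Hahn--Banach approach above has the virtue of reducing the entire argument to finite-dimensional convex separation.
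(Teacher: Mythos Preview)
Your argument is correct: the reduction to the finite intersection property of the closed sublevel sets $A_y$, followed by a finite-dimensional separation of $C=\{(F(x,y_i))_i:x\in X\}+\R_{\ge 0}^n$ from the open orthant below $(v^\star+\delta)\mathbf 1$, goes through as you describe, and the recession-cone observation indeed forces the separating functional into $\Delta_n$. Note, however, that the paper does not prove this statement at all---it is quoted as a special case of Theorem~2 of \citet{Fan:1953} and used as a black box in the proofs of Theorems~\ref{T_kraft} and~\ref{th:weakly-compact}. Your proof is essentially the classical route (close to Fan's and Sion's own arguments), so there is nothing to compare against within the paper itself.
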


The total-variation norm on $\ba$ has the representation $\|\mu\|_\textnormal{TV} = \sup_{f \in B_1} \E_\mu[f]$, where $B_1$ is the unit ball in $\Lcal$.
It is customary to define the total-variation distance $\dist_\textnormal{TV}(\mu,\nu)$ between two probability charges $\mu,\nu$ as \emph{half} the total-variation norm of $\mu-\nu$.
Because $\mu,\nu$ have unit mass, and because $B_1 = 2\Phi - 1$, meaning that $f \in B_1$ if and only if $f = 2\phi - 1$ for some test $\phi$, one obtains the representation
\begin{equation} \label{eq:TV-dist-formula-proof-section}
\dist_\textnormal{TV}(\mu,\nu) = \frac12 \|\mu - \nu\|_\textnormal{TV} = \sup_{\phi \in \Phi} \big(\E_\mu[\phi] - \E_\nu[\phi]\big), \quad  \mu,\nu \in \ba_1.
\end{equation}

With these preliminaries in hand, the proof of Theorem~\ref{T_kraft} is straightforward.

\begin{proof}[Proof of Theorem~\ref{T_kraft}]
We will apply Fan's minimax theorem with $X = \cconv(\Pcal) \times \cconv(\Qcal)$, $Y = \Phi$, and $F \colon ((\mu,\nu),\phi) \mapsto \E_\nu[\phi] - \E_\mu[\phi]$.
Indeed, $X$ is a compact convex subset of the product space $\ba \times \ba$, where each factor has the weak-$*$ topology, $Y$ is a convex subset of $\Lcal$, and $F$ has the required concavity and convexity properties and is continuous in $(\mu,\nu)$ by definition of the weak-$*$ topology.
Using first Fan's minimax theorem, then linearity and weak-$*$ continuity of $\mu \mapsto \E_\mu[\phi]$, and finally a simple algebraic manipulation, we have
\begin{align*}
\inf_{\substack{\mu \in \cconv(\Pcal) \\ \nu \in \cconv(\Qcal)}} \sup_{\phi \in \Phi} \big( & \E_\nu[\phi] - \E_\mu[\phi] \big) \\
&= \sup_{\phi \in \Phi} \inf_{\substack{\mu \in \cconv(\Pcal) \\ \nu \in \cconv(\Qcal)}} \big( \E_\nu[\phi] - \E_\mu[\phi] \big) \\
&= \sup_{\phi \in \Phi} \inf_{\substack{\mu \in \Pcal \\ \nu \in \Qcal}} \big( \E_\nu[\phi] - \E_\mu[\phi] \big) \\
&= 1 - \inf_{\phi \in \Phi} \sup_{\substack{\mu \in \Pcal \\ \nu \in \Qcal}} \big( \E_\mu[\phi] + \E_\nu[1-\phi] \big),
\end{align*}
where the infimum on the left-hand side is attained by some $\mu^* \in \cconv(\Pcal)$ and $\nu^* \in \cconv(\Qcal)$.
This is the desired statement, because the left-hand side is equal to $\dist_\TV(\cconv(\Pcal), \cconv(\Qcal))$ in view of \eqref{eq:TV-dist-formula-proof-section}, while the right-hand side is equal to $1-R(\Pcal,\Qcal)$.
\end{proof}

A similar, but different, calculation can be used to obtain Theorem~\ref{th:weakly-compact}.
This calculation will make use of the assumption that $\Pcal$ and $\Qcal$ are convex and weakly compact in the usual sense, i.e., in the topology $\sigma(\Mcal_1, C_b)$.
By considering \emph{continuous} tests $\phi \in \Phi_c = \Phi \cap C_b$, the map $\mu \mapsto \E_\mu[\phi]$ remains continuous, and the minimax theorem still applies.
Restricting to continuous tests raises the question of whether the representation in \eqref{eq:TV-dist-formula-proof-section} for the total-variation distance remains valid with $\Phi_c$ in place of $\Phi$. The following lemma confirms that in the metric space setting of Theorem~\ref{th:weakly-compact} this is indeed so.

\begin{lemma} \label{lem:TV-Cb}
Assume $\Omega$ is a metric space with its Borel $\sigma$-algebra. Then for any $\mu, \nu \in \Mcal_1$ we have $\dist_\TV(\mu,\nu) = \sup_{\phi \in \Phi_c} (\E_\mu[\phi] - \E_\nu[\phi])$.
\end{lemma}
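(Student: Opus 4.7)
The inequality $\sup_{\phi \in \Phi_c}(\E_\mu[\phi] - \E_\nu[\phi]) \le d_\TV(\mu,\nu)$ is immediate from the containment $\Phi_c \subseteq \Phi$ together with the representation $d_\TV(\mu,\nu) = \sup_{\phi \in \Phi}(\E_\mu[\phi] - \E_\nu[\phi])$, which is \eqref{eq:TV-dist-formula-proof-section} applied to countably additive probabilities. The substance of the lemma is the reverse inequality, which says that restricting from measurable to continuous tests does not shrink the supremum. The plan is to reduce to an $L^1$-density statement for the finite Borel measure $\lambda = \mu + \nu$.

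First, I would set $\lambda = \mu + \nu$ and observe that any $\phi \in \Phi$ belongs to $L^1(\lambda)$. For an arbitrary $\varepsilon > 0$, I would produce a continuous bounded $\psi \colon \Omega \to [0,1]$ with $\int |\phi - \psi|\,\d\lambda < \varepsilon$. Since $\mu, \nu \le \lambda$, this gives
\[
\big|\E_\mu[\phi] - \E_\mu[\psi]\big| + \big|\E_\nu[\phi] - \E_\nu[\psi]\big| \le 2\int |\phi-\psi|\,\d\lambda < 2\varepsilon,
\]
so $\E_\mu[\phi] - \E_\nu[\phi] \le \E_\mu[\psi] - \E_\nu[\psi] + 2\varepsilon \le \sup_{\psi \in \Phi_c}(\E_\mu[\psi] - \E_\nu[\psi]) + 2\varepsilon$. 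Letting $\varepsilon \downarrow 0$ and then taking the supremum over $\phi \in \Phi$ would complete the proof.

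To produce such a $\psi$, I would proceed in three standard steps. (i) Simple functions are dense in $L^1(\lambda)$, so it suffices to approximate indicators $\bm 1_B$ for Borel sets $B$. (ii) Every finite Borel measure on a metric space is regular: for any Borel $B$ and any $\delta > 0$ there exists a closed $F \subset B$ with $\lambda(B \setminus F) < \delta$; this reduces the problem to approximating $\bm 1_F$ for closed $F$. (iii) For closed $F$, the functions $f_n(x) = \max(1 - n\, \dist(x,F),\, 0)$ are continuous, $[0,1]$-valued, and decrease pointwise to $\bm 1_F$, so by dominated convergence $\int |f_n - \bm 1_F|\,\d\lambda \to 0$. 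Combining these three ingredients yields a continuous $[0,1]$-valued approximant of $\phi$ in $L^1(\lambda)$ — after possibly replacing $\psi$ by the truncation $(\psi \wedge 1) \vee 0$, which is still continuous, lies in $[0,1]$, and does not increase the $L^1$-distance to $\phi \in [0,1]$.

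The only step that genuinely uses the topological assumption is (ii), the outer/inner regularity of finite Borel measures on a metric space; everything else is soft. This is the one place where the metric-space hypothesis in Theorem~\ref{th:weakly-compact} enters, exactly as announced in the remark preceding the lemma.
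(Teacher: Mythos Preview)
Your proof is correct. It differs from the paper's argument in organization: the paper works directly with the Hahn decomposition of $\eta=\mu-\nu$, isolates the Borel set $A$ on which $\eta^+$ is concentrated, and then approximates $\bm1_A$ from the inside via closed sets $F_1 \subset A$ and $F_0 \subset A^c$ together with Urysohn's lemma. You instead prove the stronger statement that every test $\phi$ can be approximated in $L^1(\mu+\nu)$ by a continuous $[0,1]$-valued function, going through simple functions, regularity, and the explicit approximants $\max(1-n\,\dist(\cdot,F),0)$. Both routes rest on the same essential ingredient---inner regularity of finite Borel measures on a metric space---so neither is more general; the paper's is slightly more direct (it targets the single optimal test $\bm1_A$ and needs no truncation step), while yours is more modular (it yields a reusable $L^1$-density statement and avoids the Hahn decomposition altogether).
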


\begin{proof}
The Hahn decomposition of the signed measure $\eta = \mu - \nu$ yields a measurable set $A$ such that the positive part $\eta^+$ is concentrated on $A$ and the negative part $\eta^-$ is concentrated on $A^c$.
The total-variation distance is then $\dist_\TV(\mu,\nu) = \mu(A) - \nu(A) = \eta^+(A)$, the total mass of $\eta^+$.
Fix any closed sets $F_1 \subset A$ and $F_0 \subset A^c$. 
Urysohn's lemma, which applies in any metric space, yields a continuous function $\phi$ with $\phi = 1$ on $F_1$, $\phi = 0$ on $F_0$, and $0 \le \phi \le 1$.
Thus
\[
\E_\eta[\phi] \ge \eta^+(F_1) - \eta^-(A^c \setminus F_0) = \eta^+(F_1) + \eta^-(F_0) - \eta^-(A^c).
\]
We conclude that
\[
\sup_{\phi \in \Phi_c} (\E_\mu[\phi] - \E_\nu[\phi]) \ge \eta^+(F_1) + \eta^-(F_0) - \eta^-(A^c)
\]
for all closed sets $F_1 \subset A$ and $F_0 \subset A^c$.
Taking the supremum of the right-hand side over all such $F_1$ and $F_0$ yields $\eta^+(A)$, since positive finite measures on a metric space with the Borel $\sigma$-algebra are regular; see \citet[Theorem~1.1]{MR1700749}.
We have shown that $\sup_{\phi \in \Phi_c} (\E_\mu[\phi] - \E_\nu[\phi]) \ge \dist_\TV(\mu,\nu)$, and the reverse inequality follows from the fact that $\E_\mu[\phi] - \E_\nu[\phi] \le \dist_\TV(\mu,\nu)$ for all $\phi \in \Phi$.
\end{proof}

We now proceed with the proof of Theorem~\ref{th:weakly-compact}.
Although the main calculation is very similar to the one in the proof of Theorem~\ref{T_kraft}, we emphasize that  it is not subsumed by it. Furthermore, Theorem~\ref{T_kraft} is used in the proof, so the two results are not independent.
Recall that $\Phi_c = \Phi \cap C_b$ denotes the set of continuous tests.

\begin{proof}[Proof of Theorem~\ref{th:weakly-compact}]
Using first Fan's minimax theorem, then an algebraic manipulation and the definition of $R(\phi, \Pcal, \Qcal)$, and finally relaxing the infimum to a larger set, we obtain
\begin{align*}
\inf_{\substack{\mu \in \Pcal \\ \nu \in \Qcal}} \sup_{\phi \in \Phi_c} \big( \E_\nu[\phi] - \E_\mu[\phi] \big)
&= \sup_{\phi \in \Phi_c} \inf_{\substack{\mu \in \Pcal \\ \nu \in \Qcal}} \big( \E_\nu[\phi] - \E_\mu[\phi] \big) \\
&= 1 - \inf_{\phi \in \Phi_c} R(\phi, \Pcal, \Qcal) \\
&\le 1 - R(\Pcal, \Qcal).
\end{align*}
The infimum on the left-hand side is attained and, in view of Lemma~\ref{lem:TV-Cb}, is equal to $\dist_\TV(\Pcal, \Qcal)$.
By Theorem~\ref{T_kraft}, the right-hand side equals 
$\dist_\TV(\cconv(\Pcal), \cconv(\Qcal))$, which is bounded above by $\dist_\TV(\Pcal, \Qcal)$, the distance between two smaller sets.
We have thus shown that
\begin{align*}
\dist_\TV(\Pcal, \Qcal) &= 1 - \inf_{\phi \in \Phi_c} R(\phi, \Pcal, \Qcal) \\
&\le 1-R(\Pcal,\Qcal) \\
&\le \dist_\TV(\cconv(\Pcal), \cconv(\Qcal)) \le \dist_\TV(\Pcal, \Qcal).
\end{align*}
Thus there is equality throughout,  completing the proof.
\end{proof}

We end with the proof of Proposition~\ref{prop:ref-meas-tv-dist}.
We could have used the minimax theorem here too, but this does not seem to simplify matters compared to the proof below, which relies on the Hahn--Banach theorem instead.

\begin{proof}[Proof of Proposition~\ref{prop:ref-meas-tv-dist}]
We regard $\Pcal$ and $\Qcal$ as subsets of $L^1 = L^1(\gamma)$ by identifying measures $\mu$ with their $\gamma$-densities $f_\mu = d\mu/d\gamma$. The total-variation norm is then the $L^1$ norm of the density, $\|\mu\|_\textnormal{TV} = \|f_\mu\|_{L^1}$.
Set $r = \dist_\textnormal{TV}(\conv(\Pcal), \conv(\Qcal)) = \inf \frac12 \|f_\mu - f_\nu\|_{L^1}$, where the infimum extends over all $\mu \in \conv(\Pcal)$ and $\nu \in \conv(\Qcal)$.
Suppose $r > 0$ and note that the convex set $\conv(\Pcal) - \conv(\Qcal)$ is disjoint from $B_{2r}$, the open ball of radius $2r$ in $L^1$.
The Hahn--Banach theorem (see, e.g., \citet[Theorem~II.9.1]{sha_wol_99}) then yields a unit-norm element $\psi \in L^\infty(\gamma)$ such that $\E_{\gamma}[\psi(f_\mu - f_\nu)] \ge 2r$ for all $\mu \in \conv(\Pcal)$, $\nu \in \conv(\Qcal)$. 
We fix a version of $\psi$ such that $|\psi| \le 1$, and define the test $\phi = (1+\psi)/2 \in \Phi$.
It follows that $\E_\mu[\phi] - \E_\nu[\phi] \ge r$ for all $\mu, \nu$ in $\conv(\Pcal)$ and $\conv(\Qcal)$, and then in $\cconv(\Pcal)$ and $\cconv(\Qcal)$ by weak-$*$ continuity. 
We conclude that $\dist_\textnormal{TV}(\cconv(\Pcal), \cconv(\Qcal)) \ge r$. (If $r=0$ this holds trivially.) On the other hand, the reverse inequality holds because $r$ is the distance between the two smaller sets $\conv(\Pcal)$ and $\conv(\Qcal)$.

It remains to confirm that the minimax risk is always attained in the dominated setting.
To do so, observe that for any $\mu \ll \gamma$, the map $\phi \mapsto \E_\mu[\phi] = \E_{\gamma}[f_\mu \phi]$ is continuous in the weak-$*$ topology of $L^\infty(\gamma)$.
Thus the risk $R(\phi, \Pcal, \Qcal)$ is lower semicontinuous in this topology. Since the set of all ($\gamma$-equivalence classes of) tests is weak-$*$ compact in $L^\infty(\gamma)$ thanks to the Banach--Alaoglu theorem, there exists one that minimizes the risk.
\end{proof}

\section{The case of a singleton \texorpdfstring{$\Qcal$}{Q}}\label{sec:singleton}

The case of a singleton $\Qcal =\{\nu\}$ is of particular interest in relation to the recent work of~\cite{larsson2024numeraire}. As we formalize below, a corollary of that work (not explicitly stated there, but stated and proved below) is that a nontrivial test exists if and only if $\nu$ does not lie in the effective null hypothesis $\Peff$, defined below. A trivial corollary of our main theorem in this paper is that a nontrivial test exists if and only if $\nu$ does not lie in $\cconv(\Pcal)$. What exactly is the relationship between these two geometric objects appearing in the above statements? This section provides the answer. 

We begin with some definitions. Given a set of distributions $\Pcal$, an \emph{e-variable} is a $[0,\infty]$-valued random variable whose expectation is at most one under every $\mu \in \Pcal$. The set of all e-variables for $\Pcal$ is given by the \emph{polar} set of $\Pcal$,
\[
\Ecal = \{ Z \geq 0: \E_\mu[Z] \leq 1 \text{ for all } \mu \in \Pcal \}.
\]
\cite{larsson2024numeraire} then define the polar of $\Ecal$ (the \emph{bipolar} of $\Pcal$) as
\[
{\color{black}\Peff = \{ \mu \in \Mcal_+: \E_\mu[Z] \leq 1 \text{ for all } Z \in \Ecal \},}
\]
and call it the \emph{effective null hypothesis}. 
Note that every element of $\Peff$ has mass at most one; just consider the e-variable $Z = 1$.
No test can have nontrivial power against any distribution in $\Peff$. Indeed, as a corollary of their main theorem, we have the following.

\begin{corollary}
$\nu \in \Peff$ if and only if for any test $\phi$, $\E_\nu[\phi] \leq \sup_{\mu \in \Pcal} \E_\mu[\phi]$.
\end{corollary}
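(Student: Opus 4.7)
The plan is to prove the two directions separately by exploiting the correspondence between tests and e-variables: a test $\phi$ normalized by its worst-case $\Pcal$-expectation becomes an e-variable, while any e-variable, after truncation and rescaling, becomes a test. Neither direction requires the full bipolar machinery of \cite{larsson2024numeraire}; everything follows directly from the definitions of $\Ecal$ and $\Peff$.

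For the forward direction I would fix $\nu \in \Peff$ and an arbitrary test $\phi$, and set $c := \sup_{\mu \in \Pcal} \E_\mu[\phi]$. When $c > 0$ the random variable $Z := \phi / c$ is nonnegative and satisfies $\E_\mu[Z] \le 1$ for every $\mu \in \Pcal$, so $Z \in \Ecal$. By definition of $\Peff$ we then get $\E_\nu[Z] \le 1$, which rearranges to the desired inequality $\E_\nu[\phi] \le c$. The degenerate case $c = 0$ I would handle by noting that then $n \phi \in \Ecal$ for every $n \in \N$ (it is nonnegative and integrates to $0$ against each $\mu \in \Pcal$), so $\E_\nu[n\phi] \le 1$ for all $n$, forcing $\E_\nu[\phi] = 0 = c$.

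For the reverse direction I would assume the inequality holds for every test and fix $Z \in \Ecal$, with the goal of showing $\E_\nu[Z] \le 1$. The main obstacle here is that $Z$ is only $[0,\infty]$-valued, so it cannot be used directly as a test. I will bypass this by truncation: for each $n \in \N$ define $\phi_n := (Z \wedge n)/n$, which is a genuine $[0,1]$-valued test. For any $\mu \in \Pcal$ one has $\E_\mu[\phi_n] \le \E_\mu[Z]/n \le 1/n$, so $\sup_{\mu \in \Pcal} \E_\mu[\phi_n] \le 1/n$, and the hypothesis yields $\E_\nu[\phi_n] \le 1/n$, i.e., $\E_\nu[Z \wedge n] \le 1$. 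Monotone convergence as $n \to \infty$ then gives $\E_\nu[Z] \le 1$, and since $Z \in \Ecal$ was arbitrary we conclude $\nu \in \Peff$.

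The only mild subtlety is the $c = 0$ branch in the forward direction; the unboundedness of $Z$ in the reverse direction is the most interesting step but is cleanly resolved by the truncate-then-MCT argument above.
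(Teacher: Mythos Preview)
Your proof is correct and follows essentially the same approach as the paper's: normalize a test by its worst-case $\Pcal$-expectation to get an e-variable for the forward direction, and truncate/rescale an e-variable to get a test (with monotone convergence) for the reverse. The only cosmetic differences are that the paper handles the $c=0$ case via the single e-variable $\phi+1$ rather than the family $n\phi$, and argues the reverse direction by contrapositive rather than directly.
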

\begin{proof}
    For the forward direction, assume $\nu\in\Peff$, and consider any test $\phi$. If the constant $c = \sup_{\mu \in \Pcal} \E_\mu[\phi]$ is strictly positive, then $\phi/c$ is an e-variable, and $\nu \in \Peff$ implies that $\E_\nu[\phi/c] \leq 1$.  If instead $c=0$, then $\phi+1$ is an e-variable and $\nu \in \Peff$ implies $\E_\nu[\phi] = 0$. 
    
    For the reverse direction, assume $\nu \notin \Peff$. Then there exists an e-variable $Z$ such that $\E_\nu[Z] > 1$. By monotone convergence we may assume that $Z \leq n$ for some large $n \in \N$. Then $\phi = Z/n$ is a test, and we have $\E_\nu[\phi] > 1/n \geq \sup_{\mu \in \Pcal} \E_\mu[\phi]$, a contradiction.
\end{proof}

On the other hand, we also have the following immediate corollary of Theorem~\ref{T_kraft}.
\begin{corollary}
$\nu \in \cconv(\Pcal)$ if and only if for any test $\phi$, $\E_\nu[\phi] \leq \sup_{\mu \in \Pcal} \E_\mu[\phi]$. 
\end{corollary}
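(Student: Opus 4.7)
My plan is to read both implications off of Theorem~\ref{T_kraft} specialized to $\Qcal = \{\nu\}$, for which $\cconv(\Qcal) = \{\nu\}$, so that the theorem reduces to a statement purely about $\nu$ and $\cconv(\Pcal)$.

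For the forward direction, I would use the fact that for every test $\phi \in \Lcal$, the map $\mu \mapsto \E_\mu[\phi]$ is linear and weak-$*$ continuous on $\ba$ by the very definition of the weak-$*$ topology. A continuous linear functional attains the same supremum over a set, its convex hull, and the weak-$*$ closure of that convex hull, giving
\[
\sup_{\mu \in \cconv(\Pcal)} \E_\mu[\phi] = \sup_{\mu \in \Pcal} \E_\mu[\phi].
\]
Hence $\nu \in \cconv(\Pcal)$ immediately yields $\E_\nu[\phi] \leq \sup_{\mu \in \Pcal} \E_\mu[\phi]$ for every test $\phi$.

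For the reverse direction, I would argue the contrapositive. Suppose $\nu \notin \cconv(\Pcal)$. Because the weak-$*$ topology on $\ba$ is coarser than the TV-norm topology, every weak-$*$ closed set is TV-closed; in particular $\cconv(\Pcal)$ is TV-closed, and therefore $\dist_\TV(\{\nu\}, \cconv(\Pcal)) > 0$. Applying Theorem~\ref{T_kraft} with $\Qcal = \{\nu\}$ and $\varepsilon = 0$ then produces a test $\phi$ satisfying $\E_\nu[\phi] > \sup_{\mu \in \Pcal} \E_\mu[\phi]$, contradicting the standing hypothesis.

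The only step requiring a moment of thought is the passage from ``$\nu$ lies outside a weak-$*$ closed set'' to ``$\nu$ is at strictly positive TV distance from that set''; this is handled by the comparison of topologies above. Once that is in place, the rest is a direct invocation of Theorem~\ref{T_kraft}, so no substantive obstacle is expected.
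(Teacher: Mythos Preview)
Your proof is correct and follows precisely the route the paper intends: the corollary is stated there as an immediate consequence of Theorem~\ref{T_kraft}, with no separate argument given. You have correctly filled in the one step that might warrant comment---that $\nu \notin \cconv(\Pcal)$ forces $\dist_\TV(\nu,\cconv(\Pcal)) > 0$---and your justification via the comparison of the weak-$*$ and TV-norm topologies is sound.
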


Note that $\cconv(\Pcal) \neq \Peff$. For example, the zero measure is always in $\Peff$ but never in $\cconv(\Pcal)$. However, it is also not the case that $\Peff$ contains $\cconv(\Pcal)$: the latter may contain finitely additive elements that are absent from the former. {\color{black}Nevertheless, the above corollaries immediately imply the following theorem.}

\begin{theorem}\label{th:equal-point-null}
    $\cconv(\Pcal) \cap \Mcal_1 = \Peff \cap \Mcal_1$.
\end{theorem}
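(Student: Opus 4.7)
The plan is to derive the claimed equality as an immediate consequence of the two corollaries stated immediately before Theorem~\ref{th:equal-point-null}: both characterize membership in their respective set, for any probability measure $\nu$, by the same condition on tests. Concretely, I will fix an arbitrary $\nu \in \Mcal_1$ and combine the two equivalences by transitivity through the common right-hand side.

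Step one is the first corollary of the section, which follows from the bipolar description of $\Peff$ together with the truncation observation that $\phi/c$ (or $\phi+1$ when $c=0$) is an e-variable whenever $\phi$ is a test, and which reads
\[
\nu \in \Peff \iff \E_\nu[\phi] \leq \sup_{\mu \in \Pcal} \E_\mu[\phi] \text{ for every test } \phi \in \Phi.
\]
Step two is the second corollary, obtained by specializing Theorem~\ref{T_kraft} to the singleton alternative $\Qcal = \{\nu\}$, which gives
\[
\nu \in \cconv(\Pcal) \iff \E_\nu[\phi] \leq \sup_{\mu \in \Pcal} \E_\mu[\phi] \text{ for every test } \phi \in \Phi.
\]
Step three is the trivial observation that the two right-hand sides coincide; intersecting both equivalent descriptions with $\Mcal_1$ then delivers $\cconv(\Pcal) \cap \Mcal_1 = \Peff \cap \Mcal_1$, as claimed.

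I do not expect a genuine obstacle, since both of the required equivalences have already been established in the discussion preceding the theorem. The single subtle point worth flagging is that the second corollary requires $\dist_\TV(\cconv(\Pcal),\{\nu\}) = 0$ to force $\nu \in \cconv(\Pcal)$ itself, rather than merely into some a priori larger TV-closure. This is valid because Theorem~\ref{T_kraft} asserts that the infimum in $\dist_\TV$ is attained, and $\dist_\TV$ is a genuine metric on $\ba_1$, so any attaining $\mu^* \in \cconv(\Pcal)$ with $\dist_\TV(\mu^*, \nu) = 0$ must equal $\nu$. With that remark in place the chain of equivalences above completes the argument, and essentially all the real work has been outsourced to Theorem~\ref{T_kraft} and to the bipolar machinery of~\cite{larsson2024numeraire}.
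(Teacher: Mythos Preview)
Your proposal is correct. The paper, however, takes a genuinely different route: it deduces Theorem~\ref{th:equal-point-null} from the structural Theorem~\ref{T_Peff}, which characterizes $\Peff$ as the solid hull of the countably additive parts of $\cconv(\Pcal)$, and whose proof (Section~\ref{S_proof_T_Peff}) requires transfinite recursion over $\Fcal$, maximal elements in $\Peff$, and the Yosida--Hewitt decomposition. From that characterization, any $\mu \in \Peff \cap \Mcal_1$ is dominated by the countably additive part $\mu''_c$ of some $\mu'' \in \cconv(\Pcal)$; since both have unit mass, $\mu = \mu''_c = \mu'' \in \cconv(\Pcal)$, and the reverse inclusion is easy.

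Your argument is considerably shorter and more elementary: you simply chain together the two corollaries that the paper states immediately before the theorem, both of which characterize membership (for $\nu \in \Mcal_1$) by the identical test condition. This is a valid and self-contained proof once Theorem~\ref{T_kraft} and the bipolar corollary are available, and your handling of the one delicate point---that $\dist_\TV(\cconv(\Pcal),\{\nu\})=0$ forces $\nu \in \cconv(\Pcal)$ via the attainment clause of Theorem~\ref{T_kraft}---is correct. What your route does not deliver is the finer structural information in Theorem~\ref{T_Peff} about how \emph{all} of $\Peff$ (not just its probability-measure slice) relates to $\cconv(\Pcal)$; the paper evidently wanted that stronger statement for its own sake and obtained Theorem~\ref{th:equal-point-null} as a by-product.
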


The above theorem explains why, with their focus on a singleton $\Qcal = \{\nu\}$, \citet{larsson2024numeraire} managed to avoid $\ba$: the qualitative behavior of testability (i.e.\ the existence of a nontrivial test) does not require $\ba$. Further, the quantitative statements of~\cite{larsson2024numeraire} in terms of infimum KL divergence from $\nu$ to $\Pcal$ also avoided $\ba$. However, even for a singleton $\Qcal = \{\nu\}$, the TV distances in Theorem~\ref{T_kraft} could not have used $\Peff$ instead of $\cconv(\Pcal)$; we provide an example soon. 

{\color{black}The precise relationship between $\Peff$ and $\cconv(\Pcal)$ is however more subtle. It is characterized in the following theorem, whose proof is presented in Appendix~\ref{S_proof_T_Peff}. This result also immediately implies Theorem~\ref{th:equal-point-null}.
In the following statement, \emph{maximal} is with respect to the setwise ordering, and the \emph{solid hull} of a set $\Acal \subset \Mcal_+$ is the set $\{\mu \in \Mcal_+ \colon \mu \le \nu \text{ for some } \nu \in \Acal\}$.} 

\begin{theorem} \label{T_Peff}
Every $\mu \in \Peff$ is setwise dominated by some maximal element $\mu' \in \Peff$. The maximal element $\mu'$ is the countably additive part of some $\mu'' \in \cconv(\Pcal)$. As a consequence, $\Peff$ is the solid hull of the countably additive parts of $\cconv(\Pcal)$.
\end{theorem}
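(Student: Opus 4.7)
The plan is to establish three facts and combine them via Zorn's lemma.

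\textbf{Three building blocks.} (E1) $\Peff$ is solid in $\Mcal_+$: if $0 \le \eta \le \mu$ setwise with $\mu \in \Peff$, then $\int Z\,d\eta \le \int Z\,d\mu \le 1$ for every $Z \in \Ecal$, by monotonicity of the integral for nonnegative $Z$. (E2) For every $\mu \in \cconv(\Pcal)$, the countably additive part $\mu_c$ lies in $\Peff$: weak-$*$ continuity of $\nu \mapsto \E_\nu[Y]$ for bounded $Y \in \Ecal$ transfers $\E_\nu[Y] \le 1$ from $\Pcal$ to $\cconv(\Pcal)$, after which $\E_{\mu_c}[Y] \le \E_\mu[Y] \le 1$ follows from $\mu_c \le \mu$ setwise; monotone convergence extends this to unbounded $Z \in \Ecal$ using countable additivity of $\mu_c$. (E3) If $\eta \in \Mcal_+$ and $\eta \le \nu$ setwise with $\nu \in \ba_+$, then $\eta \le \nu_c$: by Hahn decomposition, the countably additive measure $(\eta - \nu_c)^+$ is setwise dominated by the purely finitely additive $\nu_p$, and hence vanishes.

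\textbf{Main technical step: domination.} The central lemma is that every $\mu \in \Peff$ is setwise dominated by some $\tilde\mu \in \cconv(\Pcal)$. I plan to prove this by Hahn-Banach separation from the set $\Acal := \cconv(\Pcal) - \ba_+$, which is convex and weak-$*$ closed (sum of a weak-$*$ compact set and a weak-$*$ closed cone). If $\mu \notin \Acal$, strict separation produces $f \in \Lcal$ with $\sup_{\eta \in \Acal} \E_\eta[f] < \E_\mu[f]$; finiteness of this supremum forces $f \ge 0$ pointwise (otherwise a finitely additive probability supported on $\{f < -\varepsilon\}$, constructed via an ultrafilter, drives $\inf_{\rho \in \ba_+}\E_\rho[f]$ to $-\infty$). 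The supremum then equals $s := \sup_{\nu \in \Pcal} \E_\nu[f] \ge 0$, giving $s < \E_\mu[f]$. Two cases yield the contradiction with $\mu \in \Peff$: if $s > 0$ then $f/s \in \Ecal$ but $\E_\mu[f/s] > 1$; if $s = 0$ then $nf \in \Ecal$ for every $n$, forcing $\E_\mu[f] = 0$. I expect this separation step to be the main obstacle, as it requires careful exploitation of the cone structure of $\ba_+$ to pass from the abstract functional to a pointwise nonnegative $f$ and an explicit e-variable, along with separate treatment of the degenerate case $s=0$.

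\textbf{Zorn and assembly.} To produce maximal elements I apply Zorn's lemma. For a chain $\{\mu_\alpha\}$ in $\Peff$, the total masses $m_\alpha := \mu_\alpha(\Omega)$ form a chain in $[0,1]$ (since $\mu_\alpha \le \mu_\beta$ setwise implies $m_\alpha \le m_\beta$, and by totality of the chain the converse also holds). Extract a sequence $\alpha_n$ with $m_{\alpha_n} \uparrow s := \sup_\alpha m_\alpha$; monotone convergence gives $\nu := \sup_n \mu_{\alpha_n} \in \Mcal_+$ with $\nu(\Omega) = s$, and the bounded-plus-truncation argument of (E2) shows $\nu \in \Peff$. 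Any $\mu_\alpha$ not below some $\mu_{\alpha_n}$ must satisfy $\mu_\alpha \ge \nu$ setwise with $m_\alpha = s = \nu(\Omega)$, forcing $\mu_\alpha = \nu$ by equality of total masses. Combining everything: for any $\mu \in \Peff$, Zorn applied to $\{\eta \in \Peff : \eta \ge \mu\}$ yields a maximal $\mu' \ge \mu$; the domination lemma gives $\tilde\mu \in \cconv(\Pcal)$ with $\mu' \le \tilde\mu$, hence $\mu' \le \tilde\mu_c$ by (E3), and $\tilde\mu_c \in \Peff$ by (E2); maximality forces $\mu' = \tilde\mu_c$. The solid-hull characterization of $\Peff$ is then immediate from (E1)--(E2) (one inclusion) and from the domination lemma together with (E3) (the other).
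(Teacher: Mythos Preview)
Your proof is correct and takes a genuinely different route from the paper's. The paper establishes existence of maximal elements via transfinite recursion: it well-orders $\Fcal$ as $\{A_\alpha : \alpha < \gamma\}$, constructs an increasing transfinite sequence $(\mu_\alpha)$ where each $\mu_\alpha$ is $A_\alpha$-maximal, and takes the limit. Your Zorn argument is more economical: you observe that on a chain in $\Peff$ the setwise order coincides with the total-mass order (since $\mu_\alpha \le \mu_\beta$ and $m_\alpha = m_\beta$ force equality for countably additive measures), extract a countable cofinal subsequence via the real-valued masses, and take its supremum. This neatly sidesteps the transfinite machinery. For the domination step, the paper introduces $\Peff^\ba$ and invokes the bipolar theorem to identify it with $\ba_+ \cap \cconv(\Pcal - \ba_+) = \sol\cconv(\Pcal)$; your direct Hahn--Banach separation from the closed convex set $\cconv(\Pcal) - \ba_+$ is essentially the same computation unwound (indeed the ultrafilter detour is unnecessary---a Dirac mass at a point where $f < 0$ already contradicts boundedness of the supremum). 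Both routes ultimately hinge on the same facts: weak-$*$ compactness of $\cconv(\Pcal)$, the Yosida--Hewitt decomposition, and the observation (your (E3), the paper's final lemma) that a countably additive measure dominated by $\nu \in \ba_+$ is already dominated by $\nu_c$. The paper's transfinite construction is slightly more explicit, while your approach is shorter and closer to standard order-theoretic arguments.
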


We continue with a simple yet important example, which demonstrates that even in the case of a simple alternative
$\Qcal=\{\nu\}$, in Theorem~\ref{T_kraft} it does not suffice to only consider $\cconv(\Pcal) \cap \Mcal_1$ (which restricts attention to countably additive probability measures, such as $\nu$), but we indeed have to consider finitely additive measures in $\cconv(\Pcal)$.
This example has the subtle feature that it \emph{assumes} that 
\begin{equation} \label{eq:no-diff-pow}
\text{there is no diffuse probability measure on $\textnormal{Pow}(\R)$},
\end{equation} 
where ``$\textnormal{Pow}$'' denotes the power set (the set of all subsets).
This assumption is discussed further in Remark~\ref{rem:c-hyp} below; in particular, it is implied by the continuum hypothesis. The upshot of that discussion is that the standard setting of probability theory allows for \eqref{eq:no-diff-pow}. Without further undesirable assumptions, situations like the one in the example below can occur and must be covered by the general theory.
It is an interesting open question whether similar examples can be constructed without any assumptions such as \eqref{eq:no-diff-pow}.

\begin{example}  \label{ex:251212}
Let $\Omega = [-1,1]$ and define $$\Fcal = \{A \cup B \colon A \in \textnormal{Pow}([-1,0]), B \in \textnormal{Bor}([0,1])\}.$$ Here ``$\textnormal{Pow}$'' denotes the power set and ``$\textnormal{Bor}$'' the Borel $\sigma$-algebra.
Then $\Fcal$ is the $\sigma$-algebra generated by the Borel subsets of $[0,1]$ and \emph{all} subsets of $[-1,0]$.
We assume that \eqref{eq:no-diff-pow} holds.
Consider
\[
\Pcal = \left\{\frac12\delta_x + \frac12\delta_{-x} \colon x \in (0,1)\right\}, \quad \Qcal = \{\nu\}, \quad \nu = \mathrm{Uni}[0,1].
\]
We will show that
\begin{equation} \label{eq:ba_example}
\dist_\TV(\cconv(\Pcal) \cap \Mcal_1,\Qcal) = 1 \quad \text{but} \quad
\dist_\TV(\cconv(\Pcal),\Qcal) = \frac12.
\end{equation}
This demonstrates that, in general, one cannot restrict attention to the countably additive elements of $\cconv(\Pcal)$ and $\cconv(\Qcal)$ in Theorem~\ref{T_kraft}.
Put differently, in order for Theorem~\ref{T_kraft} to hold, the closure of $\conv(\Pcal)$ and $\conv(\Qcal)$ cannot be taken in the space $\Mcal_1$ with the topology $\sigma(\Mcal_1,\Lcal)$. Finitely additive measures are really needed.

To argue \eqref{eq:ba_example}, we first observe that every $\mu \in \cconv(\Pcal)$ is symmetric around zero in the sense that $\mu(A \cup (-A)) = 2 \mu(A) = 2 \mu(-A)$ for every $A \in \textnormal{Bor}([0,1])$, where $-A = \{-x \colon x \in A\}$. Indeed, this holds for all $\mu \in \Pcal$, thus for all $\mu \in \conv(\Pcal)$, and finally for all $\mu \in \cconv(\Pcal)$ by weak-$*$ continuity.

Consider now any $\mu \in \cconv(\Pcal) \cap \Mcal_1$.
Its restriction to $[-1,0]$ is a positive countably additive measure on $\textnormal{Pow}([-1,0])$, and is therefore concentrated on a countable set $S_0 \subset [-1,0]$ thanks to Proposition~\ref{P:1}. The symmetry property applied with $A = -S_0$ then shows that $\mu$ is concentrated on $S = S_0 \cup (-S_0)$.
But since any countable set is a $\nu$-nullset, we have $\dist_\TV(\mu,\nu) \ge \mu(S) - \nu(S) = 1$, and this yields the first part of \eqref{eq:ba_example}.

We now focus on the second part of \eqref{eq:ba_example}. The test $\phi = \bm1_{[0,1]}$ certifies that $\dist_\TV(\cconv(\Pcal),\Qcal)$ is at least $\frac12$, and the point is to show that this is achieved for some $\mu \in \cconv(\Pcal)$.
To do this we start with a net $(\eta_\alpha)$ of finitely supported probability measures on $(0,1)$ that converges to $\nu$; this is possible thanks to Proposition~\ref{P:2}.
The symmetrizations $\mu_\alpha(A \cup B) = \frac12(\eta_\alpha(-A) + \eta_\alpha(B))$ for $A \in \textnormal{Pow}([-1,0])$ and $B \in \textnormal{Bor}([0,1])$  then belong to $\conv(\Pcal)$. 
On passing to a convergent subnet, we may assume that $(\mu_\alpha)$ converges to some limit $\mu \in \cconv(\Pcal)$.
The restriction of $\mu$ to $[0,1]$ is equal to $\nu/2$ by construction, and it follows that $\dist_\TV(\mu, \nu) = \frac12$.
This completes the proof of the second part of \eqref{eq:ba_example}.

It is worth noting that the restriction of $\mu$ to $[-1,0]$ must be purely finitely additive.
Indeed, its countably additive part is atomless (due to the symmetry property and the fact that on $[0,1]$, $\mu$ is proportional to the uniform  distribution) and therefore zero due to Proposition~\ref{P:1}.
\end{example}

\begin{remark} \label{rem:c-hyp}
The assumption \eqref{eq:no-diff-pow} is known to be consistent with the Zermelo--Fraenkel set theory with the axiom of choice (ZFC). 
One may thus add \eqref{eq:no-diff-pow} as an axiom without rendering any theorems proved in ZFC false.
Indeed, there are important models of ZFC, such as G\"odel's constructible universe $L$, where \eqref{eq:no-diff-pow} can be proved as a theorem. Specifically, \cite{MR2514} showed that the continuum hypothesis holds in $L$, and by a theorem of
\citet[Th\'eor\`eme~I]{banach_kuratowski_1929}, this implies \eqref{eq:no-diff-pow}. See also Theorem~13.20 and Theorem~10.1 in \citet{MR1940513} for a textbook treatment.
For these reasons, it seems sensible that our general theory should not depend on any axioms or hypotheses that disallow \eqref{eq:no-diff-pow}.
\end{remark}

We end by discussing an important implication of our theorem for the existence of uniformly powered bounded e-variables for composite $\Qcal$. An e-variable $Z$ for $\Pcal$ is said to be uniformly powered against $\Qcal$ if $\inf_{\nu \in \Qcal} \E_\nu[Z] > 1$. 
\begin{corollary}
    Let $\Pcal, \Qcal \subset \Mcal_1$. Then there exists a bounded e-variable for $\Pcal$ that is uniformly powered against $\Qcal$ if and only if $\dist_\TV(\cconv(\Pcal), \cconv(\Qcal)) > 0$.
\end{corollary}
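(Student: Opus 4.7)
The plan is to leverage Theorem~\ref{T_kraft} via a direct translation between tests (measurable functions valued in $[0,1]$) and bounded e-variables (nonnegative measurable functions with $\E_\mu[Z] \le 1$ for every $\mu \in \Pcal$). In one direction, I will normalize a bounded e-variable by its uniform upper bound to obtain a nontrivial test; in the other, I will rescale a nontrivial test to obtain a bounded uniformly powered e-variable. All the hard work is already done by Theorem~\ref{T_kraft}; what remains is essentially a scaling argument.

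For the forward direction, suppose $Z$ is a bounded e-variable for $\Pcal$ with $\|Z\|_\infty \le c$ (necessarily $c>0$, since a uniformly powered $Z$ cannot be identically zero) and $\inf_{\nu \in \Qcal} \E_\nu[Z] \ge 1 + \delta$ for some $\delta > 0$. Then $\phi := Z/c$ is a test satisfying
\[
\sup_{\mu \in \Pcal} \E_\mu[\phi] \le \frac{1}{c} < \frac{1+\delta}{c} \le \inf_{\nu \in \Qcal} \E_\nu[\phi],
\]
so $\phi$ is strictly unbiased, and Theorem~\ref{T_kraft} (with $\varepsilon = 0$) immediately yields $\dist_\TV(\cconv(\Pcal), \cconv(\Qcal)) \ge \delta/c > 0$.

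For the reverse direction, I will apply Theorem~\ref{T_kraft} with $\varepsilon = 0$ to extract a test $\phi$ with $c := \sup_{\mu \in \Pcal} \E_\mu[\phi] < d := \inf_{\nu \in \Qcal} \E_\nu[\phi]$, and then split on whether $c>0$. If $c > 0$, I set $Z := \phi/c$; this is bounded by $1/c$, satisfies $\E_\mu[Z] \le 1$ for all $\mu \in \Pcal$, and has $\inf_{\nu \in \Qcal} \E_\nu[Z] \ge d/c > 1$. If $c = 0$, then $\E_\mu[\phi] = 0$ for every $\mu \in \Pcal$, and for any $M > 1/d$ the function $Z := M\phi$ is a bounded e-variable with $\inf_{\nu \in \Qcal} \E_\nu[Z] \ge Md > 1$. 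The only real subtlety is this borderline case $c = 0$, which precludes dividing by $c$ and so forces the separate scaling by a sufficiently large constant; there is no deeper obstacle, and both constructions deliver the required bounded, uniformly powered e-variable.
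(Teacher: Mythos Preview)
Your proof is correct and follows essentially the same approach as the paper: normalize a bounded e-variable to obtain a nontrivial test in one direction, and rescale a nontrivial test to obtain a bounded uniformly powered e-variable in the other, invoking Theorem~\ref{T_kraft} with $\varepsilon=0$ both times. The only cosmetic difference is your handling of the degenerate case $c=0$, where you scale $\phi$ by a large constant $M>1/d$ whereas the paper sets $Z=1+\phi$; both constructions work equally well.
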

\begin{proof}
    In the forward direction, let $Z$ be the uniformly powered e-variable that lies in $[0,B]$ for some finite $B$. Then clearly, $Z/B$ is a nontrivial test with type I error at most $1/B$ and worst-case power strictly greater than $1/B$, which implies that $\dist_\TV(\cconv(\Pcal), \cconv(\Qcal)) > 0$ by Theorem~\ref{T_kraft}, completing the first direction.

    Assume now that $\dist_\TV(\cconv(\Pcal), \cconv(\Qcal)) > 0$. By Theorem~\ref{T_kraft}, there exists 
    a test $\phi$ such that
    $\inf_{\nu \in \Qcal} \E_\nu[\phi] >  \sup_{\mu \in \Pcal} \E_\mu[\phi]$.  If $\sup_{\mu \in \Pcal} \E_\mu[\phi] = 0$, set $Z = 1 + \phi$. Otherwise, set $Z = \phi / \sup_{\mu \in \Pcal} \E_\mu[\phi]$. Then $Z$ is a bounded e-variable that is uniformly powered against $\Qcal$, completing the proof.
\end{proof}

Following \citet[Chapter 3]{ramdas2024hypothesis},  
we say that an e-variable $Z$ is uniformly \emph{e-powered} against $\Qcal$ if $\inf_{\nu \in \Qcal} \E_\nu[\log Z] > 0$.
Theorem 3.14 in the above book then proves that a uniformly powered bounded e-variable exists if and only if a uniformly e-powered bounded e-variable exists, and so the above theorem could have also been stated with ``powered'' being replaced by ``e-powered''. This extends some results in the above book, and in~\cite{zhang2024existence}.

\subsection*{Acknowledgments}

We thank Sivaraman Balakrishnan for useful discussions regarding Le~Cam's work.
ML acknowledges support from the National Science Foundation under grant
NSF DMS-2510965.
AR acknowledges support from the Sloan Fellowship and the National Science Foundation under grant NSF DMS-2310718.

\bibliography{bibliography}
\bibliographystyle{plainnat}
% Bibliography style is supplied by pnas-new.cls.

\appendix

\section{Further comments on the choice of topology}\label{app:topology}

In the introduction we showed why the TV topology and the weak convergence topology cannot, in general, be used to obtain characterizations of testability; see Examples~\ref{eg:van-de-waart} and \ref{ex:intro-2} and the adjacent discussion.
An alternative approach would be to look for topologies on the set $\Phi$ of all tests, making this set compact. This could then be leveraged to apply the minimax theorem without compactness of the closed convex hulls of $\Pcal$ and $\Qcal$.
This idea works in the dominated setting of Theorem~\ref{T:classical}, and Le Cam's approach to embed $\Phi$ into the larger, compact, space of generalized tests can be viewed as an extension of this idea to the general case, with important drawbacks as discussed in Appendix~\ref{sec:gen-tests}.

{\color{black}
It is then natural to look for topologies that make $\Phi$ itself compact in the general non-dominated setting.
A natural attempt is to consider the space $[0,1]^\Omega$ of all $[0,1]$-valued functions on $\Omega$, equipped with the \emph{topology of pointwise convergence}.
This is simply the product topology on $[0,1]^\Omega$, which is compact by Tychonoff's theorem.
There are however two issues with this approach. First, $\Phi \subset [0,1]^\Omega$ is the proper subspace consisting of \emph{measurable} functions, which is \emph{not} compact in general.
Moreover, the linear functions $\phi \mapsto \E_\mu[\phi]$ fail to be continuous (or semicontinuous) in general.
Thus, even if one could meaningfully extend the definition of the risk $R(\phi)$ to \emph{every} $\phi$ in the compact space $[0,1]^\Omega$, the semicontinuity hypothesis of the minimax theorem would not be satisfied, and the theorem could not be used.
}

To restore (semi-)continuity, one has to work with a stronger topology.
A very strong topology is the one induced by the supremum norm $\|\phi\|_\infty = \sup_{\omega \in \Omega} |\phi(\omega)|$. The above linear maps $\phi \mapsto \E_\mu[\phi]$ are now continuous, even Lipschitz, thanks to the inequality $|\E_\mu[\phi]-\E_\mu[\psi]| \leq \|\mu\|_\TV\|\phi-\psi\|_\infty$.
{\color{black}However, compactness of both $\Phi$ and $[0,1]^\Omega$ now fails, again rendering the minimax theorem inapplicable.}
Indeed, given any sequence $\omega_n$ of mutually distinct points, the tests $\phi_n = \bm1_{\{\omega_n\}}$ are at a distance $\|\phi_n - \phi_m\|_\infty = 1$ apart, and thus form a sequence with no convergent subsequence.

At this point it may be tempting to consider the topology $\sigma(\Phi,\Mcal)$, which is the \emph{weakest} topology on $\Phi$ that makes the linear maps $\phi \mapsto \E_\mu[\phi]$ continuous for all $\mu \in \Mcal$.
Unfortunately, compactness of $\Phi$ still fails, and the net $\phi_A = \bm1_A$ indexed by finite subsets $A \subset \Omega = [0,1]$ again furnishes an example.
Indeed, suppose for contradiction that compactness holds. Then there is a convergent subnet $\phi_B$ with some limit $\phi$. This limit is the constant function $\phi = 1$. To see this, note that for each $\omega \in [0,1]$, $\phi_B(\omega)=1$ eventually, so $\phi(\omega) = \E_{\delta_\omega}[\phi] = \lim_B \E_{\delta_\omega}[\phi_B] = 1$, where we used that $\lim_B \E_\mu[\phi_B] = \E_\mu[\phi]$ for every $\mu \in \Mcal$ by definition of the topology.
However, if we now take $\mu$ to be Lebesgue measure, we obtain the contradiction $0 = \lim_B \E_\mu[\phi_B] = \E_\mu[\phi] = 1$.
This shows that $\phi_A$ has no convergent subnet, and hence that $\Phi$ is not compact.

One thus arrives at the conclusion that either $\Phi$ has to be ``compactified'' (for example by enlarging it to generalized tests as  proposed by Le Cam, with its drawbacks discussed in Appendix~\ref{sec:gen-tests}), or one has to find a way to compactify the space of measures rather than tests. The latter is the route taken in the present paper (by enlarging the space of measures), which leads us naturally --- even necessarily --- to working with finitely additive measures.

We hope that this discussion gives a further perspective on the delicate interplay between the choice of ambient space, its topology, and how these must be tailored to fit the confines of the minimax theorem. Unfortunately, we are aware of several published works which incorrectly use minimax theorems to achieve important conclusions.  For example~\cite{gul2017minimax} mistakenly invoke Sion's minimax theorem after using the product topology on $[0,1]^\Omega$, which as we discussed above destroys continuity. 
As another example~\citet[Chapter 6]{levy2008principles} invokes von Neumann's minimax theorem after endowing $\Phi$ with  $\|\cdot\|_\infty$, which as we discussed above renders $\Phi$ non-compact.
While we know of several more examples, it is not our goal to point out all such errors in other works, but merely to point to the subtleties involved with the careful choices necessary for everything to work out. It would also be of interest to see whether the corresponding conclusions in these works are incorrect, or if their proofs can be fixed using techniques developed here.

\section{Generalized tests} \label{sec:gen-tests}

In this section we elaborate on the distinction between Theorem~\ref{T_kraft} and Le~Cam's result  on generalized tests, Theorem~\ref{th:gen-test}.
After all, the two results lead to different conclusions regarding which hypotheses are testable.
While this is not a contradiction mathematically (the two results make use of \emph{tests} and \emph{generalized tests}, respectively), it seems worthwhile to look closer at how they differ in order to understand the implications.

It is useful to return to the basic example in the introduction, where $\Omega = [0,1]$, $\Fcal$ is the Borel $\sigma$-algebra, $\Pcal = \{\delta_x\colon x \in [0,1]\}$, and $\Qcal = \{\nu\}$ with $\nu = \mathrm{Uni}[0,1]$, the uniform distribution on $[0,1]$.
The weak-$*$ convex closure $\cconv(\Pcal)$ contains \emph{all} probability measures on $[0,1]$, in particular $\nu$, so Theorem~\ref{T_kraft} implies that $R(\Pcal,\Qcal) = 1$. Thus there is no nontrivial test for this pair of hypotheses.
On the other hand, we have $\dist_\textnormal{TV}(\conv(\Pcal),\nu) = 1$, so Le~Cam's Theorem~\ref{th:gen-test} implies that there is a generalized test $\phi^*$ that separates $\Pcal$ and $\Qcal$.

In fact, it is not hard to construct such a generalized test. For any $\mu \in \Mcal$, let $\mu^a$ be its absolutely continuous part with respect to $\nu$. The map $\mu \mapsto \mu^a$ is linear and continuous (even a contraction) in the total-variation norm. 
Now define $\phi^*(\mu) = \mu^a(\Omega)$, the total mass of $\mu^a$. Then $\phi^*$ is a continuous linear functional on $\Mcal$, and $\phi^*(\mu) \in [0,1]$ for all $\mu \in \Mcal_1$. That is, $\phi^*$ is a generalized test.
Furthermore, $\phi^*(\nu) = 1$ and $\phi^*(\mu) = 0$ for all $\mu \in \Pcal$.

The key issue here is that $\phi^*(\mu)$ may depend on the entire \emph{distribution} $\mu$, not just on the \emph{sample} $\omega \in [0,1]$ that is produced by $\mu$.
In this example, to compute the value of the generalized test it is not enough to know the realized sample $\omega$---in fact, its value is irrelevant---but rather whether it came from an absolutely continuous distribution or not.
This can be thought of as \emph{additional information} that the experimenter would need to acquire in addition to the sample itself.

More generally, Theorem~\ref{T_kraft} shows that whenever no nontrivial test exists, but a nontrivial generalized test does exist, some form of additional information is \emph{always} necessary in order to evaluate the generalized test.

Interestingly, as Le~Cam points out in the remark on page~476 of \citet{le2012asymptotic}, although generalized tests need not be representable by a measurable function $\phi$ on the original sample space $\Omega$ (i.e., by a test), it is always possible to extend $(\Omega, \Fcal)$ to a larger sample space $(\widetilde\Omega, \widetilde\Fcal)$ in such a way that every generalized test \emph{can} be represented by a measurable function on $\widetilde\Omega$.
As such, this extension furnishes the additional data needed to evaluate generalized tests.
However, since in applications the original space $(\Omega,\Fcal)$ typically models the information available to the experimenter, passing to the extension $(\widetilde\Omega, \widetilde\Fcal)$ may alter the information structure in an unintended way. The example above illustrates this quite clearly: here the sample $\omega$ needs to be accompanied with a ``tag'' indicating whether or not it was drawn from an absolutely continuous distribution.

Lastly, it may be of interest to know something about the structure of the extended sample space $(\widetilde\Omega, \widetilde\Fcal)$.
The construction outlined in Chapter~1.6 of \citet{le2012asymptotic} is very general but also quite abstract and difficult to interpret.
In the setting of the present paper, there is a significantly less abstract construction which we now describe.

We start with the original measurable space $(\Omega,\Fcal)$ and its space $\Mcal$ of finite signed measures.
Zorn's lemma implies that there exists a set $\{\mu_i \colon i \in I\}$ of pairwise singular probability measures that is maximal in the collection of all such sets, ordered by inclusion.
Every $\mu \in \Mcal$ is then absolutely continuous with respect to some countable combination of the $\mu_i$.
Note that the index set $I$ need not be countable. {\color{black}(If $I$ is countable there is a single dominating measure $\gamma$, and $\Mcal$ can be identified with $L^1(\gamma)$.)}
No extension of $\Omega$ is needed in this case, so the following  is only relevant if $I$ is uncountable.

For each $i \in I$ we let $(\Omega_i,\Fcal_i)$ be a copy of $(\Omega, \Fcal)$ and consider the disjoint union
\[
\widetilde\Omega = \bigsqcup_{i \in I} \Omega_i,
\]
equipped with the $\sigma$-algebra $\widetilde\Fcal$ consisting of all sets $A \subset \widetilde\Omega$ such that $A \cap \Omega_i \in \Fcal_i$ for all $i \in I$. 
We further let $\widetilde\mu$ be the measure on $\widetilde\Omega$ that restricts to $\mu_i$ on each $\Omega_i$,
\[
\widetilde\mu(A) = \sum_{i \in I} \mu_i(A \cap \Omega_i), \quad A \in \widetilde\Fcal.
\]
If $I$ is uncountable, which is the relevant case, the measure $\widetilde\mu$ is not $\sigma$-finite.
Nonetheless, $\widetilde\Fcal$-measurable functions $f$ can be integrated provided they are $\widetilde\mu$-integrable, meaning that the restriction $f_i = f|_{\Omega_i}$ belongs to $L^1(\mu_i)$ for each $i \in I$ and the norm $\|f\|_{L^1(\widetilde\mu)} = \sum_{i \in I} \|f_i\|_{L^1(\mu_i)}$ is finite.
Note that this requires $f_i = 0$ for all but countably many $i \in I$.
The space $L^1(\widetilde\mu)$ of (equivalence classes of) $\widetilde\mu$-integrable functions with the norm $\|\cdot\|_{L^1(\widetilde\mu)}$ is a Banach space, known as the $\ell^1$-direct sum of $L^1(\mu_i)$, $i \in I$.

The key point is that $L^1(\widetilde\mu)$ is isometrically isomorphic to the Banach space $(\Mcal,\|\cdot\|_\textnormal{TV})$.
Indeed, 
any $\mu \in \Mcal$ can be mapped to the unique $f \in L^1(\widetilde\mu)$ whose restriction $f_i$ to $\Omega_i$ is the density $d\mu^a/d\mu_i$, where $\mu^a$ is the absolutely continuous part of $\mu$ with respect to $\mu_i$. Note that $\mu^a$ depends on $i$, although this is not indicated in the notation, and all but countably many $f_i$ are zero.
The mapping $\mu \mapsto f$ is furthermore linear and surjective.
Thanks to the maximality property of the family $\{\mu_i \colon i \in I\}$, it preserves the norm $\|\mu\|_\textnormal{TV} = \|f\|_{L^1(\widetilde\mu)}$, and is thus an isometry.

In this way the elements of $\Mcal$ can be regarded as $\widetilde\mu$-absolutely continuous measures on the (potentially much larger) sample space $\widetilde\Omega$.
Furthermore, the dual space $\Mcal'$ can be identified with the dual of $L^1(\widetilde\mu)$, which is $L^\infty(\widetilde\mu)$. 
Thus any generalized test can be represented as a measurable function on $\widetilde\Omega$, which was the goal of the construction.
(The $L^1$-$L^\infty$ duality holds in the present non-$\sigma$-finite case because $\widetilde\mu$ is \emph{(strictly) localizable}; see \citet[Chapters~21 and~24]{MR2462280}, specifically Definition~211E, Theorem~211L(d), and Theorem~243G.)

Let us clarify how this construction encodes the ``additional information'' mentioned previously.
The observed sample $\widetilde\omega \in \widetilde\Omega$ belongs to $\Omega_i$ for exactly one $i$, so it must have been generated by a distribution $\mu$ with nonzero density relative to $\mu_i$. 
Thus $i$, which can be inferred from $\widetilde\omega$, is the ``tag'' that conveys information about what might be called the ``absolute continuity type'' of the generating distribution $\mu$.
Since the spaces $\Omega_i$ are tagged, but otherwise identical, copies of $\Omega$, the sample $\widetilde\omega$ can indeed be regarded as a point of $\Omega$, supplemented with the tag $i$.
The generalized test, regarded as a bounded measurable function on $\widetilde\Omega$, can now be evaluated as long as this supplemented sample is observed.
Furthermore, ordinary (not generalized) tests correspond to functions on $\widetilde\Omega$ that are the same on every $\Omega_i$, that is, do not depend on the tag $i$. 

Lastly, we note that the considerable complexity of the space $\widetilde\Omega$ can sometimes be reduced. This happens when the family $\{\mu_i \colon i \in I\}$ can be replaced by a smaller family that still dominates $\Pcal$ and $\Qcal$ but perhaps not \emph{every} measure in $\Mcal$.
In particular, in the dominated case of Theorem~\ref{T:classical}, the reference measure itself can be used as the single member of the family, and one can take $\widetilde\Omega = \Omega$ as noted previously.

\begin{remark}
Le Cam has shown that the minimax risk, {\color{black}computed over all generalized tests}, can always be attained by a generalized test.
The extended space $\widetilde\Omega$ can be used to construct a net of (non-generalized) tests that converge in the weak-$*$ topology of $L^\infty(\widetilde\mu)$ to a given generalized test.
Specifically, let $\phi^* \in L^\infty(\widetilde\mu)$ be a generalized test, and let $\phi^*_i$ be its restriction to $\Omega_i$ for each $i \in I$.
For any countable subset of indices $J \subset I$ one can find mutually disjoint sets $A_i \in \Fcal$, $i \in J$, such that $\mu_i(A_i) = 1$ for all $i \in J$, where we regard $A_i$ as a subset of $\Omega_i$.
Now define $\phi_J = \sum_{i \in J} \phi^*_i \bm1_{A_i}$, regarded as a function on $\Omega$.
Then $\phi_J$ is a (non-generalized) test and satisfies $\phi_J = \phi^*_i$, $\mu_i$-a.s., for all $i \in J$.
Furthermore, for every $\mu \in \Mcal$ we have $\E_\mu[\phi_J] = \phi^*(\mu)$ whenever $J$ contains all $i \in I$ such that $d\mu^a/d\mu_i$ is not zero.
Thus the $\phi_J$ form a net that converges to $\phi^*$ in the weak-$*$ topology of $L^\infty(\widetilde\mu)$.
\end{remark}

\section{Some measure-theoretic results}  \label{A:measure-theoretic}

\begin{proposition} \label{P:2}
Let $(\Omega,\Fcal)$ be a measurable space and let $\Pcal=\{\delta_\omega \colon \omega \in \Omega\}$. Then $\cconv(\Pcal) = \ba_1$. 
\end{proposition}

\begin{remark}
Let us point out that if $\Omega = [0,1]$ and $\Pcal$ consists of all $\mu$ with a Lebesgue density, then $\cconv(\Pcal)$ is \emph{not} equal to all of $\ba_1$. 
This is because every $\mu \in \cconv(\Pcal)$ satisfies $\mu(A) = 0$ for all Lebesgue nullsets $A$.
In particular, point masses $\delta_\omega$ are not weak-$*$ limit points of $\conv(\Pcal)$.
For example, the sequence $\mu_n = \mathrm{Uni}[0,1/n]$ does \emph{not} converge to $\delta_0$. In fact, this sequence does not converge at all. 
This is demonstrated, for instance, by the function $f(\omega) = \sin(\log(1/\omega))$ for $\omega \ne 0$ and $f(0) = 0$, which satisfies $\E_{\mu_n}[f] = \sin({\pi}/{4} + \log(n))/\sqrt{2}$. 
This does not converge as $n \to \infty$, and then neither can $\mu_n$.
However, weak-$*$ compactness implies that $\mu_n$ does have a convergent \emph{subnet}. The limit is a purely finitely additive probability measure $\mu$ that assigns unit mass to any interval $[0,\epsilon]$ with $\epsilon > 0$.
\end{remark}

\begin{proof}[Proof of Proposition~\ref{P:2}]
Fix any $\mu \in \ba_1$. We construct a net in $\conv(\Pcal)$ as follows.
Let $\Pi$ be the set of all partitions $\pi = \{A_1,\ldots,A_n\}$ of $\Omega$ into finitely many nonempty measurable sets, partially ordered by refinement.
For each such $\pi$, choose points $\omega_i \in A_i$, $i=1,\ldots,n$, and define
\begin{equation} \label{eq:mu-pi}
\mu_\pi = \sum_{i=1}^n \mu(A_i) \delta_{\omega_i} \in \conv(\Pcal).    
\end{equation}
Consider now any $B \in \Fcal$. 
For any $\pi \in \Pi$ and any $\pi' \in \Pi$ that is a common refinement of $\pi$ and the partition $\{B, B^c\}$, we have $\mu_{\pi'}(B) = \mu(B)$.
It follows that the net $(\mu_\pi(B))_{\pi \in \Pi}$ in $\R$ converges to $\mu(B)$.
Since this holds for every $B \in \Fcal$, we conclude that the net $(\mu_\pi)_{\pi \in \Pi}$ in $\conv(\Pcal)$ converges to $\mu$.
\end{proof}

Recall that $\textnormal{Pow}(\R)$ refers to the power set of $\R$.

\begin{proposition} \label{P:1}
Assume \eqref{eq:no-diff-pow} and let \(\mu\) be a positive finite measure on \((\R,{\rm Pow}(\R))\). Then \(\mu\) is purely atomic, i.e., the set
\(
S=\{x\in \R:\mu(\{x\})>0\}
\)
is at most countable and for every \(A\subset \R\) we have
\[
\mu(A)=\sum_{x\in A\cap S}\mu(\{x\}).
\]
\end{proposition}

\begin{proof}
For each $k \in \N$, the set $S_k = \{x \in \R \colon \mu(\{x\}) > k^{-1}\}$ must be finite. Thus the set $S = \bigcup_{k \in \N} S_k$ is at most countable. Now set $\mu_d(A) = \mu(A) - \mu(A \cap S)$ for all $A \subset \R$. This defines a positive, finite, and diffuse measure on $\textnormal{Pow}(\R)$. By \eqref{eq:no-diff-pow}, $\mu_d = 0$, and this completes the proof.
\end{proof}

\section{Proof of Theorem~\ref{T_Peff}} \label{S_proof_T_Peff}

The first part of Theorem~\ref{T_Peff} states that every element of $\Peff$ is setwise dominated by some maximal element of $\Peff$. This is shown in Lemma~\ref{L_Peff_maximal_exists} below, building on several auxiliary results which we now develop.
{\color{black} We say that a net $\mu_\alpha$ in $\ba_+$ is \emph{increasing} if $\mu_\alpha \ge \mu_\beta$ setwise whenever $\alpha \ge \beta$. It is bounded if the total masses $\mu_\alpha(\Omega)$ have a finite upper bound.}

\begin{lemma} \label{L_incr_net_ca}
Let $\mu_\alpha$ be a bounded increasing net in $\ba_+$ and define $\mu(A) = \sup_\alpha \mu_\alpha(A)$, $A \in \Fcal$. {\color{black}Then $\mu$ belongs to $\ba_+$ and} $\mu_\alpha$ converges to $\mu$ in the weak-$*$ topology. If each $\mu_\alpha$ is countably additive, then so is $\mu$.
\end{lemma}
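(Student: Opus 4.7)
The plan is to first verify that $\mu$ genuinely belongs to $\ba_+$, then establish weak-$*$ convergence by approximating bounded measurable functions by simple ones, and finally use a short subadditivity-style argument to transfer countable additivity from the net to its supremum.

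For the first step, I would observe that finite additivity of $\mu$ follows from a standard fact about increasing nets: since the directed index set allows any two indices $\alpha_1,\alpha_2$ to be dominated by a common $\alpha$, one has $\sup_\alpha (f(\alpha)+g(\alpha)) = \sup_\alpha f(\alpha) + \sup_\alpha g(\alpha)$ for any two increasing nets $f,g$ on the same directed set. Applied with $f(\alpha)=\mu_\alpha(A)$ and $g(\alpha)=\mu_\alpha(B)$ for disjoint $A,B \in \Fcal$, this yields $\mu(A \cup B) = \mu(A) + \mu(B)$. Boundedness is immediate: since each $\mu_\alpha \ge 0$, we have $\mu(A) \le \mu(\Omega) = \sup_\alpha \mu_\alpha(\Omega) \le \sup_\alpha \|\mu_\alpha\|_\TV < \infty$. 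Hence $\mu \in \ba_+$.

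Next, for weak-$*$ convergence, the definition of $\mu$ immediately gives $\E_{\mu_\alpha}[\bm1_A] \to \E_\mu[\bm1_A]$ for every $A \in \Fcal$. By linearity this extends to simple functions, and then to arbitrary $f \in \Lcal$ by uniform approximation: given a simple $g$ with $\|f-g\|_\infty < \varepsilon$, the triangle inequality yields
\[
|\E_{\mu_\alpha}[f] - \E_\mu[f]| \le \varepsilon(\|\mu_\alpha\|_\TV + \|\mu\|_\TV) + |\E_{\mu_\alpha}[g] - \E_\mu[g]|,
\]
and the first summand is uniformly small in $\alpha$ by the assumed boundedness of the net.

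For the countable additivity claim, I would use a very clean sandwich argument. Let $(B_n)$ be pairwise disjoint with union $B$. Finite additivity of $\mu$ gives $\sum_{n=1}^N \mu(B_n) = \mu\bigl(\bigsqcup_{n \le N} B_n\bigr) \le \mu(B)$, so $\sum_n \mu(B_n) \le \mu(B)$. For the reverse, countable additivity of each $\mu_\alpha$ yields
\[
\mu_\alpha(B) = \sum_n \mu_\alpha(B_n) \le \sum_n \mu(B_n),
\]
and taking the supremum over $\alpha$ gives $\mu(B) \le \sum_n \mu(B_n)$. I do not anticipate a serious obstacle here; the only subtle point worth flagging is the manipulation of sups of sums along an increasing net in the first step, which is where the hypothesis that the net is \emph{increasing}—not merely directed in the weak-$*$ topology—is genuinely used.
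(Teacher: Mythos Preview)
Your proof is correct and follows essentially the same route as the paper: both establish finite additivity of $\mu$ from the increasing-net property, deduce weak-$*$ convergence from setwise convergence (the paper cites this equivalence for bounded nets while you spell out the simple-function approximation), and use the identical sandwich argument for countable additivity.
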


\begin{proof}
The boundedness assumption means that $\sup_\alpha \mu_\alpha(\Omega) < \infty$, so $\mu$ is well-defined. Moreover, a bounded increasing net in $\R$ converges to its supremum, so we have $\mu(A) = \lim_\alpha \mu_\alpha(A)$ for all $A \in \Fcal$. For any disjoint $A, B \in \Fcal$ we pass to the limit in the identity $\mu_\alpha(A \cup B) = \mu_\alpha(A) + \mu_\alpha(B)$ to see that $\mu \in \ba_+$. For bounded nets in $\ba$, setwise convergence is equivalent to weak-$*$ convergence, so $\mu_\alpha \to \mu$ in the weak-$*$ topology.
Lastly, suppose each $\mu_\alpha$ is countably additive, and consider an arbitrary countable family of mutually disjoint sets $A_i \in \Fcal$. Monotonicity and finite additivity of $\mu$ yields
\[
\mu\left( \bigcup_{i=1}^\infty A_i \right) \ge \mu\left( \bigcup_{i=1}^n A_i \right) = \sum_{i=1}^n \mu( A_i )
\]
for all $n$, while countable additivity of $\mu_\alpha$ and the fact that $\mu$ dominates $\mu_\alpha$ yields
\[
\mu_\alpha\left( \bigcup_{i=1}^\infty A_i \right) = \sum_{i=1}^\infty \mu_\alpha(A_i) \le \sum_{i=1}^\infty \mu(A_i)
\]
for all $\alpha$. Passing to the limit in $n$ and $\alpha$, respectively, shows that $\mu$ is countably additive.
\end{proof}

\begin{lemma} \label{L_incr_net_Peff}
Let $\mu_\alpha$ be an increasing net in $\Peff$. Then $\mu_\alpha$ converges in the weak-$*$ topology to an element $\mu$ of $\Peff$.
\end{lemma}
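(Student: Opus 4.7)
The plan is to first produce the candidate limit $\mu$ by direct appeal to Lemma~\ref{L_incr_net_ca}, and then verify that $\mu \in \Peff$, treating with care the fact that e-variables may take the value $+\infty$ and hence need not lie in $\Lcal$.

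First, I would check that the net is uniformly bounded. The constant function $Z \equiv 1$ lies in $\Ecal$ (since $\E_P[1] = 1$ for every $P \in \Pcal$), so by definition of $\Peff$ each $\mu_\alpha$ has $\mu_\alpha(\Omega) \le 1$. Hence $\sup_\alpha \mu_\alpha(\Omega) < \infty$. Since the members of $\Peff$ are countably additive, Lemma~\ref{L_incr_net_ca} applies and yields weak-$*$ convergence of $\mu_\alpha$ to the setwise supremum $\mu(A) = \sup_\alpha \mu_\alpha(A)$, with $\mu$ countably additive and nonnegative.

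It remains to show $\mu \in \Peff$, i.e.\ that $\E_\mu[Z] \le 1$ for every $Z \in \Ecal$. The main obstacle is that such a $Z$ is only required to be $[0,\infty]$-valued, so the functional $\nu \mapsto \E_\nu[Z]$ is typically \emph{not} weak-$*$ continuous, and one cannot simply pass the inequality $\E_{\mu_\alpha}[Z] \le 1$ to the limit. I would circumvent this by a truncation argument: for each $n \in \N$, the function $Z \wedge n$ is bounded and $\Fcal$-measurable, hence in $\Lcal$, and it remains in $\Ecal$ because $\E_P[Z \wedge n] \le \E_P[Z] \le 1$ for every $P \in \Pcal$. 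Weak-$*$ convergence tested against $Z \wedge n$ then gives $\E_\mu[Z \wedge n] = \lim_\alpha \E_{\mu_\alpha}[Z \wedge n] \le 1$ for every $n$.

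Finally, since $\mu$ is countably additive (crucially, this is where the second conclusion of Lemma~\ref{L_incr_net_ca} is used), the monotone convergence theorem is available: as $Z \wedge n \uparrow Z$ pointwise, we obtain $\E_\mu[Z] = \sup_n \E_\mu[Z \wedge n] \le 1$. Since $Z \in \Ecal$ was arbitrary, this yields $\mu \in \Peff$ and completes the proof. The crux of the argument is thus the interplay between weak-$*$ convergence, which only handles bounded test functions, and the countable additivity of the setwise supremum, which reinstates monotone convergence and lets us pass to unbounded e-variables.
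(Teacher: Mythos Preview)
Your proof is correct and follows essentially the same route as the paper: apply Lemma~\ref{L_incr_net_ca} to get the countably additive limit $\mu$, then pass the bounded e-variable constraints through the weak-$*$ limit. The paper's proof is terser---it simply invokes $\Ecal_b$ and says the inequalities $\int f\,d\mu_\alpha \le 1$ for $f \in \Ecal_b$ are preserved under weak-$*$ convergence---whereas you make explicit the truncation $Z \wedge n$ and the monotone convergence step that justify why bounded e-variables suffice to determine membership in $\Peff$ for countably additive $\mu$.
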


\begin{proof}
By Lemma~\ref{L_incr_net_ca}, $\mu_\alpha$ converges in the weak-$*$ topology to some nonnegative countably additive measure $\mu$. This measure belongs to $\Peff$ because the property $\int f d\mu_\alpha \le 1$ for all bounded $f \in \Ecal$ is preserved under weak-$*$ convergence.
{\color{black}Monotone convergence then yields $\int f d\mu \le 1$ for \emph{all} $f \in \Ecal$.}
\end{proof}

\begin{lemma} \label{L_Peff_maximal_exists}
Every $\mu_0 \in \Peff$ is dominated by some maximal element of $\Peff$.
\end{lemma}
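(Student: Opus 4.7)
The plan is a direct Zorn's lemma argument on the set $D(\mu_0) = \{\mu \in \Peff : \mu \ge \mu_0\}$, partially ordered by setwise domination. First I would note that $D(\mu_0)$ is nonempty (it contains $\mu_0$). The heart of the matter is verifying the chain condition: every totally ordered chain $\Ccal \subseteq D(\mu_0)$ has an upper bound in $D(\mu_0)$.

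For this, I would view any such chain as an increasing net, indexed by itself with the inherited (now total) order. The net is automatically bounded, since $\nu \in \Peff$ forces $\nu(\Omega) \le 1$ (the constant function $1$ is an e-variable because every $\mu \in \Pcal$ has unit mass), so $\sup_{\nu \in \Ccal} \nu(\Omega) \le 1$. Lemma~\ref{L_incr_net_Peff} then gives a weak-$*$ limit $\mu^* \in \Peff$, and by the construction in Lemma~\ref{L_incr_net_ca} this limit satisfies $\mu^*(A) = \sup_{\nu \in \Ccal} \nu(A)$ for all $A \in \Fcal$. In particular, $\mu^* \ge \nu$ for every $\nu \in \Ccal$, and $\mu^* \ge \mu_0$, so $\mu^* \in D(\mu_0)$ is the required upper bound.

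Zorn's lemma now produces a maximal element $\mu' \in D(\mu_0)$. To finish, I would check that maximality within $D(\mu_0)$ upgrades to maximality within all of $\Peff$: if $\nu \in \Peff$ satisfies $\nu \ge \mu'$, then $\nu \ge \mu_0$ as well, hence $\nu \in D(\mu_0)$, and maximality of $\mu'$ in $D(\mu_0)$ forces $\nu = \mu'$.

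I do not anticipate a genuine obstacle here; all the analytic content has been absorbed into the preceding two lemmas, in particular the crucial fact from Lemma~\ref{L_incr_net_ca} that the weak-$*$ (equivalently, setwise) limit of a bounded increasing net of countably additive measures remains countably additive, which is what keeps us inside $\Peff$ when passing to limits of chains. Notably, Lemma~\ref{L_A_maximal_exists} is not needed for the present statement; its role is presumably reserved for the subsequent identification of the maximal element with the countably additive part of an element of $\cconv(\Pcal)$ in the remainder of the proof of Theorem~\ref{T_Peff}.
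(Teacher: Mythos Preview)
Your Zorn argument is correct and is genuinely different from the paper's route. The paper does \emph{not} save Lemma~\ref{L_A_maximal_exists} for later; it is used precisely here, inside the proof of Lemma~\ref{L_Peff_maximal_exists}, and nowhere else. Concretely, the paper well-orders $\Fcal$ as $\{A_\alpha : \alpha < \gamma\}$ and builds an increasing transfinite sequence $(\mu_\alpha)_{\alpha \le \gamma}$ in $\Peff$ above $\mu_0$: at successor stages it invokes Lemma~\ref{L_A_maximal_exists} to pass to an $A_\alpha$-maximal dominator, and at limit stages it invokes Lemma~\ref{L_incr_net_Peff} to take the setwise supremum; the terminal element $\mu_\gamma$ is then $A$-maximal for every $A \in \Fcal$, hence maximal. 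Your approach collapses this entire transfinite bookkeeping into a single application of Zorn's lemma, using Lemma~\ref{L_incr_net_Peff} only once to bound chains. Both arguments consume the axiom of choice at the same strength (well-ordering versus Zorn), but yours is shorter and, as you observed, makes Lemma~\ref{L_A_maximal_exists} entirely dispensable. The paper's construction does have the minor virtue of being slightly more explicit about how the maximal element is assembled set by set, but this buys nothing for the statement at hand.
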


\begin{proof}
{\color{black}
Let $D = \{\nu \in \Peff \colon \nu \ge \mu_0\}$ be the set of elements of $\Peff$ that dominate $\mu_0$ setwise.
Every chain in $D$ is an increasing net, which thanks to Lemma~\ref{L_incr_net_Peff} admits an upper bound in $\Peff$, hence in $D$.
Zorn's lemma then yields a maximal element of $D$, which must also be maximal in $\Peff$.
}
\end{proof}

The remaining parts of Theorem~\ref{T_Peff}  require the introduction of a third object: one must view the null hypothesis $\Pcal$ as a subset of $\ba$ and consider an appropriate extension of the effective null, namely,
\begin{equation} \label{eq_Peff_ba}
\Peff^\ba = \{\mu \in \ba_+ \colon \int f d\mu \le 1 \text{ for all } f \in \Ecal_b\},
\end{equation}
where $\Ecal_b$ is the set of all bounded e-variables.
Note that $\Peff$ consists precisely of the countably additive elements of $\Peff^\ba$.

\begin{lemma} \label{L_Peff_ba_rep}
$\Peff^\ba = \ba_+ \cap \cconv(\Pcal - \ba_+)$.
\end{lemma}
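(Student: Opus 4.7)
The plan is to prove a double inclusion, exploiting the duality between $\ba$ equipped with its weak-$*$ topology and $\Lcal$. Note that $\cconv(\Pcal - \ba_+)$ is by definition a weak-$*$ closed convex subset of $\ba$, and both sides of the claimed equality are subsets of $\ba_+$, so the geometry is set up for a Hahn--Banach separation argument.

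The easy direction $\ba_+ \cap \cconv(\Pcal - \ba_+) \subseteq \Peff^\ba$ is direct. Fix $\mu$ on the left and $f \in \Ecal_b$. For any $\mu_0 \in \Pcal$ and $\eta \in \ba_+$, the bound $\int f\,d(\mu_0 - \eta) = \int f\,d\mu_0 - \int f\,d\eta \leq 1$ holds because $f$ is a nonnegative bounded e-variable. This inequality passes to finite convex combinations by linearity, and then to the weak-$*$ closure since $\nu \mapsto \int f\,d\nu$ is weak-$*$ continuous for $f \in \Lcal$. Combined with $\mu \in \ba_+$ and the definition \eqref{eq_Peff_ba}, this gives $\mu \in \Peff^\ba$.

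For the reverse inclusion $\Peff^\ba \subseteq \ba_+ \cap \cconv(\Pcal - \ba_+)$, since $\Peff^\ba \subseteq \ba_+$ by definition, it suffices to place each $\mu \in \Peff^\ba$ into $\cconv(\Pcal - \ba_+)$. I argue by contradiction: if $\mu$ fails to lie in this closed convex set, then Hahn--Banach separation in $(\ba, \sigma(\ba, \Lcal))$ produces $f \in \Lcal$ and $c \in \R$ with $\int f\,d\nu \leq c < \int f\,d\mu$ for every $\nu \in \cconv(\Pcal - \ba_+)$. Two structural features of $\Pcal - \ba_+$ then pin down $f$: first, $0 = \mu_0 - \mu_0 \in \Pcal - \ba_+$, so $c \geq 0$; second, for any $\mu_0 \in \Pcal$, $\eta \in \ba_+$, and $n \in \N$, the element $\mu_0 - n\eta$ lies in $\Pcal - \ba_+$, and the resulting inequality $\int f\,d\mu_0 - n \int f\,d\eta \leq c$ forces $\int f\,d\eta \geq 0$ as $n \to \infty$, hence $f \geq 0$ setwise. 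Taking $\eta = 0$ yields $\int f\,d\mu_0 \leq c$ for every $\mu_0 \in \Pcal$. If $c > 0$, then $f/c \in \Ecal_b$ and $\int (f/c)\,d\mu > 1$ contradicts $\mu \in \Peff^\ba$; if $c = 0$, then $\int f\,d\mu_0 = 0$ for all $\mu_0 \in \Pcal$, so $\lambda f \in \Ecal_b$ for every $\lambda > 0$, and $\int \lambda f\,d\mu = \lambda \int f\,d\mu \to \infty$ delivers the same contradiction.

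The main technical point to confirm is the cone argument that forces the separating functional $f$ to be setwise nonnegative; this is precisely where the $-\ba_+$ component of $\Pcal - \ba_+$ earns its keep, since without it one could not guarantee that the separator rescales to an element of $\Ecal_b$ and the whole Hahn--Banach reduction would fail.
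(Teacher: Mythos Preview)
Your proof is correct. The paper reaches the same conclusion but routes it through the bipolar theorem: it first verifies the intermediate identity $\Ecal_b = (\Pcal - \ba_+)^\circ$, then writes $\Peff^\ba = \ba_+ \cap \Ecal_b^\circ = \ba_+ \cap (\Pcal - \ba_+)^{\circ\circ}$ and invokes the bipolar theorem (using $0 \in \Pcal - \ba_+$) to identify the bipolar with $\cconv(\Pcal - \ba_+)$. Your argument is essentially that proof of the bipolar theorem unpacked by hand via Hahn--Banach separation, and the structural ingredients---the ray $\mu_0 - n\eta$ forcing $f \ge 0$, and $0 \in \Pcal - \ba_+$ forcing $c \ge 0$---are exactly those that appear when the paper checks $\Ecal_b = (\Pcal - \ba_+)^\circ$. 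The paper's version is terser if one is willing to cite the bipolar theorem; yours is more self-contained and makes the role of the separating functional explicit, at the cost of handling the case split $c > 0$ versus $c = 0$ manually.
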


\begin{proof}
The polars and bipolars below are understood with respect to the dual pair $(\Lcal, \ba)$, and we follow the notation and definitions of \cite{sha_wol_99}.
We first show that
\begin{equation} \label{eq_Eb_ba_rep}
\Ecal_b = (\Pcal - \ba_+)^\circ.
\end{equation}
The right-hand side equals $\{f \in \Lcal \colon \int f d(\mu - \nu) \le 1 \text{ for all } \mu \in \Pcal, \nu \in \ba_+\}$, which certainly contains $\Ecal_b$. Conversely, suppose $f$ belongs to the right-hand side. Taking $\nu = 0$ we see that $\int f d\mu \le 1$ for all $\mu \in \Pcal$. Moreover, for any $\omega \in \Omega$ we may take an arbitrary $\mu \in \Pcal$ and set $\nu = \mu + t \delta_\omega$ for $t > 0$ to get $f(\omega) \ge - t^{-1}$. Sending $t$ to infinity shows that $f \in \Lcal_+$, and hence that $f \in \Ecal_b$. This establishes \eqref{eq_Eb_ba_rep}.
Next, the definition \eqref{eq_Peff_ba} states that $\Peff^\ba = \ba_+ \cap \Ecal_b^\circ$. Together with \eqref{eq_Eb_ba_rep}, this yields
\[
\Peff^\ba = \ba_+ \cap (\Pcal - \ba_+)^{\circ \circ}.
\]
Since $\Pcal - \ba_+$ contains zero, an application of the bipolar theorem completes the proof.
\end{proof}

\begin{lemma}\label{L_Peff_ba}
$\Peff^\ba$ is the solid hull of $\cconv(\Pcal)$,
i.e.,
\begin{equation} \label{eq_Peff_ba_sol_cconv_P}
\Peff^\ba = \sol \cconv(\Pcal).
\end{equation}
As a consequence, the set of maximal elements of $\Peff^\ba$  precisely equals $\cconv(\Pcal)$, and every $\mu \in \Peff^\ba$ is setwise dominated by some $\mu' \in \cconv(\P)$.
\end{lemma}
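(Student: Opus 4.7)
The plan is to derive the identity $\Peff^\ba = \sol\cconv(\Pcal)$ from the representation $\Peff^\ba = \ba_+ \cap \cconv(\Pcal - \ba_+)$ supplied by Lemma~\ref{L_Peff_ba_rep}, and then read off the structural consequences from a simple total-mass comparison. The two inclusions will be handled very differently: one is immediate from definitions and weak-$*$ continuity, whereas the other requires an approximation-plus-compactness argument.

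For the easy inclusion $\sol\cconv(\Pcal) \subset \Peff^\ba$, I would take $\mu \in \ba_+$ dominated setwise by some $\mu' \in \cconv(\Pcal)$, and verify that $\int f\,d\mu \le \int f\,d\mu' \le 1$ for every $f \in \Ecal_b$. The first inequality uses $f \ge 0$ together with the setwise domination, while the second holds on $\Pcal$ by definition of $\Ecal_b$, extends to $\conv(\Pcal)$ by linearity, and then to $\cconv(\Pcal)$ by weak-$*$ continuity of the linear functional $\eta \mapsto \int f\,d\eta$ on $\ba$.

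For the reverse inclusion I would argue by approximation. Given $\mu \in \Peff^\ba$, Lemma~\ref{L_Peff_ba_rep} furnishes a net $(\mu_\alpha - \nu_\alpha)$ converging weak-$*$ to $\mu$, with $\mu_\alpha \in \conv(\Pcal)$ and $\nu_\alpha \in \ba_+$. Testing weak-$*$ convergence against the constant function $\bm1_\Omega$ yields $\nu_\alpha(\Omega) = \mu_\alpha(\Omega) - (\mu_\alpha - \nu_\alpha)(\Omega) \to 1 - \mu(\Omega)$, so the positive net $(\nu_\alpha)$ is eventually norm-bounded in $\ba$. Appealing to Banach--Alaoglu together with the weak-$*$ closedness of $\cconv(\Pcal)$ and $\ba_+$, I would pass twice to a subnet so as to assume $\mu_\alpha \to \mu^* \in \cconv(\Pcal)$ and $\nu_\alpha \to \nu^* \in \ba_+$ simultaneously. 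Then the identity $\mu = \mu^* - \nu^*$ forces $\mu^* \ge \mu$ setwise, whence $\mu \in \sol\cconv(\Pcal)$. This subnet extraction, and in particular the verification of the uniform bound on $\|\nu_\alpha\|_\TV$, is the only genuinely delicate step; everything else is routine.

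For the consequences, I would observe that the constant function $1$ lies in $\Ecal_b$ (each $\mu \in \Pcal$ is a probability measure), so every $\mu \in \Peff^\ba$ satisfies $\mu(\Omega) \le 1$, while every $\mu' \in \cconv(\Pcal) \subset \ba_1$ has $\mu'(\Omega) = 1$. Consequently any setwise inequality $\mu' \le \mu$ with $\mu' \in \cconv(\Pcal)$ and $\mu \in \Peff^\ba$ forces $\mu(\Omega) = 1$ and hence $\mu = \mu'$. This shows both that every element of $\cconv(\Pcal)$ is maximal in $\Peff^\ba$, and conversely that any maximal $\mu \in \Peff^\ba$ must coincide with the $\cconv(\Pcal)$-dominator supplied by \eqref{eq_Peff_ba_sol_cconv_P}, and therefore belongs to $\cconv(\Pcal)$. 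The final clause, that every $\mu \in \Peff^\ba$ is setwise dominated by some $\mu' \in \cconv(\Pcal)$, is then a mere restatement of \eqref{eq_Peff_ba_sol_cconv_P}.
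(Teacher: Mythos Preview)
Your proof is correct and follows essentially the same route as the paper: both directions rest on Lemma~\ref{L_Peff_ba_rep}, the easy inclusion uses solidity and weak-$*$ closedness of $\Peff^\ba$, and the hard inclusion extracts a weak-$*$ convergent subnet via Banach--Alaoglu. One small streamlining in the paper: it passes to a subnet only for $(\mu_\alpha) \subset \ba_1$ (which is already compact, so no preliminary bounding is needed), and then $\nu_\alpha = \mu_\alpha - (\mu_\alpha - \nu_\alpha)$ converges automatically along that same subnet, making your separate boundedness estimate for $(\nu_\alpha)$ and second subnet extraction unnecessary.
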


\begin{proof}
We prove \eqref{eq_Peff_ba_sol_cconv_P}; the remaining statement is then immediate.
In view of Lemma~\ref{L_Peff_ba_rep}, it suffices to show that
\begin{equation} \label{eq_Peff_ba_inclusions}
\ba_+ \cap \cconv(\Pcal - \ba_+) 
\subset \sol \cconv(\Pcal)
\subset \Peff^\ba.
\end{equation}
We start with the first inclusion, and let $\eta$ be an element of the left-hand side. 
Then $\eta \in \ba_+$ and there is a net $\eta_\alpha = \mu_\alpha - \nu_\alpha$ with $\mu_\alpha \in \conv(\Pcal)$ and $\nu_\alpha \in \ba_+$ such that $\eta_\alpha \to \eta$. 
Because each $\mu_\alpha$ belongs to $\ba_1$, which is weak-$*$ compact by the Banach--Alaoglu theorem, on passing to a subnet we have $\mu_\alpha \to \mu$ for some $\mu \in \ba_1$. 
In fact, $\mu \in \cconv(\Pcal)$. 
We also get $\nu_\alpha = \mu_\alpha - \eta_\alpha \to \nu = \mu - \eta$ which, like each $\nu_\alpha$, belongs to the weak-$*$ closed set $\ba_+$. 
In summary, we have shown that $0 \le \eta \le \mu$ with $\mu \in \cconv(\Pcal)$. That is, $\eta \in \sol \cconv(\Pcal)$.

We now prove the second inclusion in \eqref{eq_Peff_ba_inclusions}. It is clear from the definition \eqref{eq_Peff_ba} that $\Peff^\ba$ is convex, weak-$*$ closed, and contains $\Pcal$. Therefore it also contains $\cconv(\Pcal)$. It is also clear that $\Peff^\ba$ is solid, so it must contain $\sol \cconv(\Pcal)$ as well. 
This completes the proof of \eqref{eq_Peff_ba_inclusions}, and hence of \eqref{eq_Peff_ba_sol_cconv_P}.
\end{proof}

We now turn to the second part of Theorem~\ref{T_Peff}; this is the content of the following lemma.

\begin{lemma}
Every maximal element of $\Peff$ is the countably additive part of some element of $\cconv(\Pcal)$.
\end{lemma}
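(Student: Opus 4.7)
The plan is to leverage the $\ba$-version of the effective null, namely $\Peff^\ba$ of \eqref{eq_Peff_ba}. By Lemma~\ref{L_Peff_ba}, $\Peff^\ba$ is the solid hull of $\cconv(\Pcal)$ and its maximal elements are precisely $\cconv(\Pcal)$. Since $\Mcal_+ \subset \ba_+$, we have $\Peff \subset \Peff^\ba$, so any maximal $\mu \in \Peff$ can be viewed as an element of $\Peff^\ba$ and is setwise dominated by some $\mu'' \in \cconv(\Pcal)$ via Lemma~\ref{L_Peff_ba}. The goal is then to show that the \emph{countably additive part} of this $\mu''$ coincides with $\mu$.

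For that I would invoke the Yosida--Hewitt decomposition $\mu'' = \mu''_c + \mu''_p$, where $\mu''_c \in \Mcal_+$ and $\mu''_p \in \ba_+$ is purely finitely additive. A standard fact is that $\mu''_c$ is the largest countably additive measure setwise dominated by $\mu''$. In particular, since $\mu$ is countably additive and $\mu \le \mu''$ setwise, we obtain $\mu \le \mu''_c$ setwise.

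The main step is then to verify that $\mu''_c$ itself lies in $\Peff$. Since $\mu''_c \le \mu''$ setwise and $\mu'' \in \cconv(\Pcal) \subset \Peff^\ba$, solidness of $\Peff^\ba$ immediately yields $\mu''_c \in \Peff^\ba$, i.e., $\E_{\mu''_c}[Z] \le 1$ for every \emph{bounded} e-variable $Z$. To extend this to arbitrary $Z \in \Ecal$, which are only required to be $[0,\infty]$-valued, I would apply monotone convergence to the truncations $Z \wedge n$; this is the one place where the countable additivity of $\mu''_c$ is essential, and it is exactly what promotes the bounded e-variable inequality to the full defining condition of $\Peff$.

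Once $\mu''_c \in \Peff$ is established, the maximality of $\mu$ in $\Peff$ together with $\mu \le \mu''_c$ forces $\mu = \mu''_c$, so $\mu$ is the countably additive part of the element $\mu'' \in \cconv(\Pcal)$, as desired. The principal obstacle I anticipate is precisely the monotone-convergence step just described: without countable additivity one cannot pass from $\Ecal_b$ to $\Ecal$, which is why the argument must route through $\Peff^\ba$ and then extract the countably additive part, rather than working inside $\Peff$ from the outset.
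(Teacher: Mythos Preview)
Your proposal is correct and follows essentially the same route as the paper: pass to $\Peff^\ba$, dominate by some $\mu''\in\cconv(\Pcal)$ via Lemma~\ref{L_Peff_ba}, take the Yosida--Hewitt decomposition, use that $\mu''_c$ is the largest countably additive measure below $\mu''$ to get $\mu\le\mu''_c$, check $\mu''_c\in\Peff$, and conclude by maximality. The only difference is that you spell out the monotone-convergence step (from $\Ecal_b$ to $\Ecal$) needed to show $\mu''_c\in\Peff$, which the paper leaves implicit in the sentence ``because $\mu''_c$ is countably additive and dominated by $\mu''$, it belongs to $\Peff$.''
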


\begin{proof}
Let $\mu' \in \Peff$ be a maximal element.
Since $\Peff \subset \Peff^\ba$, the representation \eqref{eq_Peff_ba_sol_cconv_P} of Lemma~\ref{L_Peff_ba}, which we just established, implies that $\mu' \le \mu''$ for some $\mu'' \in \cconv(\Pcal)$. Let $\mu'' = \mu''_c + \mu''_p$ be the Yosida--Hewitt decomposition of $\mu''$ into its countably additive and purely finitely additive parts; see \citet[Theorem~1.24]{MR45194}.
The countably additive part $\mu''_c$ is the largest countably additive measure that is dominated by $\mu''$. Indeed, if $\nu \le \mu''$ is countably additive, then so is $(\nu - \mu''_c)^+ \le \mu''_p$, implying that $(\nu - \mu''_c)^+ = 0$ since $\mu''_p$ is purely finitely additive, and hence $\nu \le \mu''_c$.
We apply this with $\nu = \mu'$ to get $\mu' \le \mu''_c$. Moreover, because $\mu''_c$ is countably additive and dominated by $\mu''$, it belongs to $\Peff^\ba$ and hence, being countably additive, to $\Peff$.
But $\mu'$ is maximal in $\Peff$, so we must actually have $\mu' = \mu''_c$.
\end{proof}

{\color{black}
\section{Practical implications} \label{app:practical-implications}

\subsection{Contaminated null against simple alternative}
Consider testing a \emph{contaminated} null distribution against a simple alternative $\Qcal = \{\nu_0\}$.
The null hypothesis is given by $\Pcal = \{(1-\varepsilon)\mu_0 + \varepsilon \eta \colon \eta \in \Ccal\}$, where $\mu_0$ is a simple null distribution, $\Ccal$ is a convex set of contamination laws, and $\varepsilon \in (0,1)$ is the fraction of the population that comes from such a law.
The classical Kraft--Le~Cam theorem, if it applied, would suggest that the minimax risk is given by $1 - \dist_\TV(\Pcal, \Qcal)$. (No convex hulls are needed since both $\Pcal$ and $\Qcal$ are already convex.)
However, depending on the contamination model, this may give an incorrect answer.
For example, suppose $\mu_0$ and $\nu_0$ are both atomless, but $\Ccal$ is the convex hull of all Dirac point masses.
Then, because $\mu_0$ and $\nu_0$ are both singular with respect to all $\eta \in \Ccal$, we have in view of \eqref{eq:TV-dist-formula-proof-section} that
\[
\dist_\TV(\Pcal, \Qcal) = \frac12 \|(1-\varepsilon)\mu_0 - \nu_0\|_\TV + \frac{\varepsilon}{2}.
\]
However, thanks to Proposition~\ref{P:2}, the weak-$*$ closure of $\Ccal$ contains all of $\Mcal_1$, so $\cconv(\Pcal) \supset \{(1-\varepsilon)\mu_0+\varepsilon \eta \colon \eta \in \Mcal_1\}$.
In particular, by considering $\eta = \nu_0$ one finds
\[
\dist_\TV(\cconv(\Pcal), \Qcal) \le \frac12 (1-\varepsilon) \|\mu_0 - \nu_0\|_\TV.
\]
Now, the triangle inequality applied with the decomposition $(1-\varepsilon)(\mu_0-\nu_0) = ((1-\varepsilon)\mu_0 - \nu_0) + \varepsilon \nu_0$ yields
\[
(1-\varepsilon) \|\mu_0 - \nu_0\|_\TV \le \|(1-\varepsilon)\mu_0 - \nu_0\|_\TV + \varepsilon,
\]
and the inequality is always strict. Indeed, for any $\mu \in \Mcal$ and $\nu \in \Mcal_+$, one can only have $\|\mu + \nu\|_\TV = \|\mu\|_\TV + \|\nu\|_\TV$ if there is a $\nu$-full measure set $\Omega'$ such that $\mu(\cdot \cap \Omega')$ is nonnegative.
With $\mu = (1-\varepsilon)\mu_0 - \nu_0$ and $\nu = \varepsilon \nu_0$, this would yield $0 \le \mu(\Omega') = (1-\varepsilon) \mu_0(\Omega') - 1 \le -\varepsilon$, a contradiction.
Thus
\[
\dist_\TV(\cconv(\Pcal), \Qcal) < \dist_\TV(\Pcal, \Qcal),
\]
and the gap can be as large as $\varepsilon$ (for instance if $\mu_0 = \nu_0$).
Thus, for this choice of contamination model, the classical theory will underestimate the minimax risk.
On the other hand, for other choices this issue does not arise, such as when $\mu_0$, $\nu_0$, and $\Ccal$ are all dominated by a reference measure.
This illustrates the need to exercise caution when choosing the model.

\subsection{Quantized alternative against simple null}

Let $\Omega = [0,1)$ regarded as a circle (``mod~1'').
We consider the simple null $\Pcal = \{\mu_0\}$ where $\mu_0$ is the uniform distribution.
The alternative is that the data comes from some $\nu_0$, but is only observed up to a known precision $1/m$ and with unknown bias $\theta$ for some $m \in \N$ and $\theta \in [0,1)$.
More precisely, for each $\theta$, let $q_\theta$ be the \emph{quantization map} that rounds any $x \in [0,1)$ down to the nearest point in the grid $\{\theta + k/m \colon k=0,\ldots,m-1\}$.
The alternative is then defined as the set of pushforwards
\[
\Qcal = \{(q_\theta)_* \nu_0 \colon \theta \in [0,1)\}.
\]
Every element of $\conv(\Qcal)$ is singular with respect to $\mu_0$, so the TV distance is one, and classical Kraft--Le~Cam suggests the risk is zero.
However, it can be shown that $\cconv(\Qcal)$ actually contains, in particular, all mixtures $\nu_h = \int_{[0,1)} (q_\theta)_* \nu_0 \, h(\theta) d\theta$ where $h(\theta)$ is a density on $[0,1)$.
A calculation shows that any such mixture has a Lebesgue density given by
\[
f_h(x) = \nu_0\left(\left[x, x+\frac1m\right)\right) \sum_{k=0}^{m-1} h\left(x - \frac{k}{m}\right).
\]
The TV distance to $\mu_0$ equals $\frac{1}{2} \int_{[0,1)} |f_h(x)-1| dx$, which can be smaller than one. 
Here are two cases where this happens:
\begin{itemize}
\item Taking $h \equiv 1$, the TV distance $\dist_\TV(\mu_0,\nu_h)$ tends to $\dist_\TV(\mu_0, \nu_0)$ in the infinite-precision limit $m \to \infty$. Thus for large $m$, the risk is approximately bounded below by the simple-vs-simple risk $1-\dist_\TV(\mu_0, \nu_0)$.

\item Suppose $\nu_0$ is $(1/m)$-periodic in the sense that $\nu_0([x, x+1/m))$ is constant in $x$; this is the case, for instance, if $\nu_0$ has a $(1/m)$-periodic density.
Then with $h \equiv 1$, $f_h$ is constant and $\nu_h$ is the uniform density. Thus $\dist_\TV(\mu_0,\nu_h) = 0$ and the risk is equal to one in this (rather special) case.
\end{itemize}

\subsection{Minimax risk via convex programs}

We now present a method by which the minimax risk can, in principle, be computed via a sequence of convex programs.
Recall that $\Pi$ denotes the set of finite measurable partitions of $\Omega$, and that $\mu_\pi$ is given by \eqref{eq:mu-pi} for any measure $\mu$.
For each $\pi \in \Pi$, consider the subsets of the $|\pi|$-simplex given by
\[
C_\pi = \textnormal{cl} \{\mu_\pi \colon \mu \in \conv(\Pcal)\},
\quad
D_\pi = \textnormal{cl} \{\nu_\pi \colon \nu \in \conv(\Qcal)\},
\]
where ``$\textnormal{cl}$'' refers to the Euclidean closure.

\begin{corollary}
\label{cor:finite-partition}
We have
\begin{equation}
\label{eq:finite-partition}
1-R(\mathcal P,\mathcal Q)
=
\sup_{\pi\in\Pi}
\min_{\substack{p\in C_\pi \\ q\in D_\pi}}
\frac12\lVert p-q\rVert_1.
\end{equation}
Consequently, for every $\varepsilon\geq0$, the minimax risk is below $1-\varepsilon$
if
and only if there is a finite measurable partition $\pi$ for which
$d_{\mathrm{TV}}(C_\pi,D_\pi)>\varepsilon$.  In that event, there is a test with risk below $1-\varepsilon$ which is constant on the sets of $\pi$. 
\end{corollary}

\begin{proof}
Note that $|R(\phi)-R(\psi)|
\leq 2\|\phi-\psi\|_\infty$ for all $\phi,\psi \in \Phi$.
Since every test can be approximated uniformly by a test that is constant on a finite measurable partition, it suffices to compute the minimax risk over such tests.
Fix $\pi=\{A_1,\ldots,A_k\}$ and parameterize these tests by $a\in[0,1]^k$ via $\phi_a=\sum_{i=1}^k a_i\mathbf 1_{A_i}$.
The finite-dimensional minimax theorem gives
\begin{align*}
\sup_{a\in[0,1]^k} & (1-R(\phi_a))
=\sup_{a\in[0,1]^k}
\min_{\substack{p\in C_\pi \\ q\in D_\pi}} a^\top(q-p)\\
&= \min_{\substack{p\in C_\pi \\ q\in D_\pi}}
\sup_{a\in[0,1]^k}a^\top(q-p)
=\min_{\substack{p\in C_\pi \\ q\in D_\pi}}
\frac12\lVert p-q\rVert_1,
\end{align*}
where the last equality follows from
$\sum_i(q_i-p_i)=0$.  Taking the supremum over $\pi$ proves
\eqref{eq:finite-partition}.
\end{proof}

Combining \eqref{eq:finite-partition} with Theorem~\ref{T_kraft} yields
\[
 \dist_\TV(
 \cconv(\mathcal P),
 \cconv(\mathcal Q)
 )
=
\sup_{\pi\in\Pi}
 \dist_\TV(C_\pi,D_\pi).
\]
If $\pi'$ refines $\pi$, then $\dist_\TV(C_{\pi'},D_{\pi'})
\geq \dist_\TV(C_\pi,D_\pi)$,
so the finite-partition values form an increasing net under
refinement.
}

\end{document}